\newtheorem{theorem}{Theorem}
\newtheorem*{definition}{Definition}
\newtheorem{prop}{Proposition}
\newtheorem{lemma}{Lemma}
\newtheorem{rmk}{Remark}
\theoremstyle{definition}
\numberwithin{equation}{section}
\begin{document}

\title[]
{Nagell-Lutz Theorem for Imaginary Quadratic Fields and class groups of quadratic fields}
\author{Leena Mondal, Amrutha Chalil \and Kalyan Banerjee}
\address[Leena Mondal]{SRM University AP, 
Department of Mathematics, Neerukonda, Amaravati, Andhra Pradesh-522240, India}
\email{leena\_mondal@srmap.edu.in, leenamondal11@gmail.com} 
\address[Amrutha Chalil]{SRM University AP,
Department of Mathematics, Neerukonda, Amaravati, Andhra Pradesh-522240, India}
\email{amrutha\_c@srmap.edu.in,amruthacgangadaran@gmail.com}
\address[Kalyan Banerjee]{SRM University AP,
Department of Mathematics, Neerukonda, Amaravati, Andhra Pradesh-522240, India}
\email{kalyan.ba@srmap.edu.in}

\subjclass[2020]{14H52}
\keywords{ Elliptic Curves, Nagell-Lutz }

\date{}
 
\maketitle

\begin{abstract}

We prove the Nagell-Lutz theorem for the imaginary quadratic fields of class number one.
 
\end{abstract}

\section{Introduction and statement of results}

Elliptic curves are distinguished class of Diophantine equations, and they play a pivotal role in modern number theory. These curves, defined by equations of the form 
\begin{equation}\label{gnrl eqn}
y^2=x^3+ax+b~~~~~~a,b\in \mathbb Z,
\end{equation} 
are central to the study of integer and rational solutions to polynomial equations (for more details about elliptic curves, we refer to \cite{S-T}). The rich structure of elliptic curves has led to groundbreaking discoveries in areas such as algebraic geometry and modular forms. 

\smallskip

\smallskip

The Nagell-Lutz theorem is a significant result in the study of elliptic curves, particularly when it comes to finding integral solutions on these curves, which are torsion points. Provides a set of criteria to efficiently identify such points, simplifying the search for rational or integer solutions to elliptic curve equations. The theorem is crucial because it provides a finite set of possibilities for checking integer solutions.

\smallskip
 Understanding elliptic curves over imaginary quadratic fields is an important aspect of algebraic number theory, as it connects the theory of elliptic curves with the arithmetic of number fields. These types of question were studied by Kamieny\cite{K}, Kamieny-Najman \cite{KM}, Kenku-Memmose\cite{KM}, Najman\cite{N},\cite{N1} and Thong\cite{T}. In their work,  the  torsion points on elliptic curves over quadratic fields were extensively studied.
 
 Building on this inspiration, we aim to extend these results to apply any imaginary quadratic field with class number one.  Our primary interest lies in studying an elliptic curve defined over imaginary quadratic fields with class number one. This specific case is significant in number theory as it links the properties of elliptic curves with the arithmetic of imaginary quadratic fields that have unique factorization in their ring of integers. That is our interest is to study the elliptic curve
\begin{equation*}
    E:y^2=x^3+Ax+B,~~~~~A,B\in \mathbb Q(\sqrt{D}),
\end{equation*}
where, $$\mathbb Q(\sqrt{D})=\{a+b\sqrt{D}:a,b\in \mathbb Q\}$$ and $$D=-1, -2, -3, -7, -11, -19, -43, -67, -163$$\\
It is of great interest to explore the possible connections between elliptic curves over rationals and those over imaginary quadratic field. For example, as an application of main theorem we obtain a generalization of Soleng's \cite{S} theorem, that starting from certain torsion points on an elliptic curve we can produce elements in the class groups of a family of quadratic fields.

\begin{rmk}
    In the following sections of the paper, we will adopt the notation $\mathcal{O}_K$ to represent the ring of integers of the quadratic field $K=\mathbb Q(\sqrt{D})$, where $D=-1, -2, -3, -7, -11, -19,\\ -43, -67, -163$.
\end{rmk}

Next, we state our main theorem.
\begin{theorem}(\textbf{Extended Nagell-Lutz Theorem})\label{main thm}
Let $E:y^2=x^3+Ax+B$ with $A,B\in \mathcal{O}_K$. If a point $(x,y)\in E$ has finite order which are not $2$-torsion defined over $K$, then both $x~\text{and}~y\in \mathbb Z(\sqrt{D})$.
\end{theorem}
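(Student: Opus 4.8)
The plan is to mimic the classical proof of the Nagell-Lutz theorem over $\mathbb{Q}$, working $p$-adically but now with respect to the prime ideals of $\mathcal{O}_K$. Since $K$ is an imaginary quadratic field of class number one, $\mathcal{O}_K$ is a PID, hence a UFD, and for each nonzero prime ideal $\mathfrak{p}=(\pi)$ we obtain a discrete valuation $v_\mathfrak{p}$ on $K$ with $v_\mathfrak{p}(x)<0$ exactly detecting that $\pi$ appears in the denominator of $x$. The statement that a torsion point has coordinates in $\mathbb{Z}(\sqrt D)=\mathcal{O}_K$ (or the relevant order) is equivalent to saying $v_\mathfrak{p}(x)\ge 0$ and $v_\mathfrak{p}(y)\ge 0$ for every prime $\mathfrak{p}$. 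So the whole argument reduces to a local computation carried out one prime at a time, and the global conclusion follows by quantifying over all $\mathfrak{p}$.

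The key steps, in order, would be as follows. First I would fix a prime $\mathfrak{p}=(\pi)$ and study the reduction behaviour of coordinates of points near the origin, defining the filtration $E_r(K)=\{(x,y)\in E : v_\mathfrak{p}(x)\le -2r,\ v_\mathfrak{p}(y)\le -3r\}$ together with the point at infinity, exactly as in the rational case. The crucial input is the \emph{balanced valuation} lemma: if $v_\mathfrak{p}(x)<0$ then automatically $v_\mathfrak{p}(x)=-2r$ and $v_\mathfrak{p}(y)=-3r$ for some $r\ge 1$, which one reads off directly from the Weierstrass equation by comparing $v_\mathfrak{p}$ of the three terms $y^2$, $x^3$, $Ax$, $B$ and using $A,B\in\mathcal{O}_K$. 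Second, I would introduce the local parameter $t=x/y$ and show $v_\mathfrak{p}(t)=r>0$ on $E_r$, then prove that the sets $E_r(K)$ form a descending chain of subgroups of $E(K)$ with $E_r/E_{r+1}$ embedding into the additive group $\mathfrak{p}^r/\mathfrak{p}^{r+1}$; concretely, the formal-group addition law gives $t(P_1\oplus P_2)\equiv t(P_1)+t(P_2)\pmod{\mathfrak{p}^{\,?}}$, a $\mathfrak{p}$-adic statement that is proved by the usual power-series manipulation of the addition formulas. Third, I would invoke torsion: if $P$ has finite order $m$ and $v_\mathfrak{p}(x(P))<0$, then $P\in E_1$, and since $\mathfrak{p}^r/\mathfrak{p}^{r+1}$ is torsion-free as an $\mathcal{O}_K$-module modulo the residue characteristic, multiplication by $m$ on the filtration quotients forces $P$ into $\bigcap_r E_r=\{O\}$ unless $\mathfrak{p}$ divides $m$; a short separate argument handles the finitely many primes above the rational primes dividing $m$, or one shows no nontrivial torsion can sit in $E_1$ at all. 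Concluding that $v_\mathfrak{p}(x(P))\ge 0$ and $v_\mathfrak{p}(y(P))\ge 0$ for every $\mathfrak{p}$ gives $x,y\in\mathcal{O}_K$.

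The main obstacle I anticipate is making the formal-group / reduction machinery genuinely work over $\mathcal{O}_K$ rather than over $\mathbb{Z}$. Two points need care. The additive quotient argument requires that the residue field $\mathcal{O}_K/\mathfrak{p}$ and the characteristic it sits over be handled correctly: a rational prime $p$ can split, ramify, or remain inert, so $\mathfrak{p}^r/\mathfrak{p}^{r+1}$ has residue characteristic equal to the rational prime below $\mathfrak{p}$, and the "torsion-free modulo $p$" step must be phrased so that the order $m$ interacts with $\mathfrak{p}$ through the correct rational prime. The second delicate point is the balanced-valuation lemma and the convergence of the power-series addition law; these rely only on $v_\mathfrak{p}$ being a discrete valuation with $v_\mathfrak{p}(A),v_\mathfrak{p}(B)\ge 0$, so I expect them to transfer verbatim once the UFD property of $\mathcal{O}_K$ (guaranteed precisely by class number one) is used to justify that $v_\mathfrak{p}$ is well defined and that a common denominator can be cleared. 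The class-number-one hypothesis is exactly what lets me treat each $\mathfrak{p}$ as principal and argue with a single uniformizer $\pi$, so I would flag that as the place where the hypothesis is essential and where the argument would break for fields of larger class number.
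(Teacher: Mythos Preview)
Your outline correctly reproduces the classical filtration argument, and it matches the paper's Lemmas~1--6 (and the parallel Lemmas~7--12 for the non-Euclidean cases) almost step for step: the balanced-valuation lemma, the $(t,s)$ coordinates, the subgroup filtration $E_r$, and the congruence $t(P_1)+t(P_2)\equiv t(P_3)\pmod{\mathfrak{p}^{5r}}$ leading to ``no torsion in $E_1$.'' That part is fine and yields $x,y\in\mathcal{O}_K$.

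The gap is that you have written ``$\mathbb{Z}(\sqrt D)=\mathcal{O}_K$,'' and for seven of the nine values of $D$ this is false. When $D\equiv 1\pmod 4$ (i.e.\ $D=-3,-7,-11,-19,-43,-67,-163$) the ring of integers is $\mathcal{O}_K=\mathbb{Z}\bigl[\tfrac{1+\sqrt D}{2}\bigr]$, which strictly contains $\mathbb{Z}[\sqrt D]$. The paper is explicit that the \emph{content} of the theorem is precisely the improvement from $\mathcal{O}_K$ down to the smaller order $\mathbb{Z}[\sqrt D]$; the conclusion $x,y\in\mathcal{O}_K$ is what they call ``the classical part of the extension.'' Your argument, quantified over all prime ideals $\mathfrak{p}$ of $\mathcal{O}_K$, can only ever produce $x,y\in\mathcal{O}_K$: there is no prime $\mathfrak{p}$ of $\mathcal{O}_K$ at which $\tfrac{1+\sqrt D}{2}$ has negative valuation, so no $\mathfrak{p}$-adic condition will push the coordinates into the index-$2$ suborder.

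The paper handles this extra step by a separate, hands-on computation: write $x=a+b\,\tfrac{1+\sqrt D}{2}$ and $y=c+d\,\tfrac{1+\sqrt D}{2}$ with $a,b,c,d\in\mathbb{Z}$, expand the Weierstrass equation, separate rational and $\sqrt D$-parts, and reduce modulo small powers of~$2$ to force $b$ and $d$ to be even. This $2$-parity argument (together with a nonvanishing-discriminant constraint that forces the relevant coefficient of $A$ to be even) is the new ingredient, and it is entirely absent from your proposal. Without it you prove only Proposition~1 of the paper, not Theorem~1.
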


In this theorem it is important to note that the classical part of the extension of Nagell-Lutz tells us that $(x,y)\in \mathcal{O}_K^2$, but here we are improving it for those fields where the rings of integers are not of the form $\mathbb{Z}(\sqrt{D})$.

In the next theorem we assume that the class number of $\mathbb Q(\sqrt{-d})$ is one. 

\begin{theorem}
Given an elliptic curve $y^2=x^3+(-2m+b\sqrt{-d})x$ over $\mathbb Q(\sqrt{-d})$, we have a homomorphism from the group of 2-torsion points on this elliptic curve to the class group of the number field $\mathbb Q(\sqrt{-p^3+4mp})$ for infinitely many primes $p$, not dividing $2m$ and the class group contains a subgroup isomorphic to $\mathbb Z/2\mathbb Z\times \mathbb Z/2\mathbb Z$. Here, $m$ is a positive integer.
\end{theorem}

\section{Proof of the main theorem}

 In this section, we prove some important lemmas that will be used later.

\begin{definition}
    Let $\alpha,\beta\in{\mathcal{O}_K}$, we say that $\alpha$ is divisible by $\beta$, denoted by $\beta\vert \alpha$, if $\beta\neq 0$ and there exists $\gamma\in {\mathcal{O}_K}$ such that $\alpha=\beta \gamma$. 
\end{definition}
 \begin{rmk}
     Let \begin{equation}\label{Elliptic curve1}
         E:y^2=x^3+Ax+B ~~with~~ A,B\in \mathbb Q(\sqrt{D}).
     \end{equation}\\ So,
     \begin{align*}
         A&=\frac{A_1+\sqrt{D}A_2}{A_3+\sqrt{D}A_4},\qquad A_1,A_2,A_3,A_4\in{\mathcal{O}_K}\\
         &=\frac{(A_1A_3+DA_2A_4)+\sqrt{D}(A_2A_3-A_1A_4)}{A_3^2-DA_4^2}\\
         &=\frac{A_1^{'}}{A_2{'}}
     \end{align*}
     where $A_1^{'}=(A_1A_3+DA_2A_4)+\sqrt{D}(A_2A_3-A_1A_4)$ and $A_2^{'}=A_3^2-DA_4^2$ and $A_1^{'}\in {\mathcal{O}}_K, A_{2}^{'}\in \mathbb Z$.
\smallskip
Similarly, $B=\frac{B_{1}^{'}}{B_{2}^{'}}$, $B_{1}^{'}\in{\mathcal{O}_K},B_{2}^{'}\in\mathbb Z$.\\
So \eqref{Elliptic curve1} becomes
\begin{align*}
    y^{2}=x^{3}+\frac{A_{1}^{'}}{A_{2}^{'}}x+\frac{B_{1}^{'}}{B_{2}^{'}}.
\end{align*}\label{ellptic curve2}
Let $D=A_{2}^{'}B_{2}^{'}$, multiplying both sides of \eqref{ellptic curve2} by $D^6$, we will get
\begin{align*}
    (D^3y)^2=(D^2x)^3+D^3A_{1}^{'}B_{2}^{'}(D^2x)+D^5A_{2}^{'}B_{1}^{'}.
\end{align*}\label{ellipticurve3}
Again, let $X=D^2x, Y=D^3y, A^{'}=D^3A_{1}^{'}B_{2}^{'}, B^{'}=D^5A_{2}^{'}B_{1}^{'}$ and then \eqref{ellipticurve3} becomes\\
\begin{equation}
    Y^{2}=X^3+A^{'}X+B^{'}, \qquad A^{'},B^{'}\in{\mathcal{O}_K}.
\end{equation}
Thus, we can assume that our elliptic curve \eqref{Elliptic curve1} has coefficients from $\mathcal{O}_K$.
 \end{rmk}

 \begin{lemma}\label{lemma1}
 Let $E:y^2=x^3+Ax+B,~~ A,B\in{\mathcal{O}_K}$. For any $(x,y)\in E(\mathbb Q(\sqrt{D}))$, $p\vert den(x)$ if and only if $p\vert den(y)$. Here, $den(x)$ denotes the denominator of $x$.   
 \end{lemma}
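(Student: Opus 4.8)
The plan is to reformulate the divisibility statement in terms of the $p$-adic valuation attached to the prime $p$. Since $K=\mathbb{Q}(\sqrt{D})$ has class number one, the ring $\mathcal{O}_K$ is a unique factorization domain, so for each prime $p\in\mathcal{O}_K$ there is a well-defined valuation $v_p\colon K^\times\to\mathbb{Z}$, namely the exponent of $p$ in the factorization of an element, extended to quotients by $v_p(\alpha/\beta)=v_p(\alpha)-v_p(\beta)$. Writing $x=\alpha/\beta$ in lowest terms, the elementary divisibility condition $p\mid den(x)$ from the Definition is exactly the condition $v_p(x)<0$, and likewise for $y$. With this dictionary the lemma is equivalent to the single assertion $v_p(x)<0\iff v_p(y)<0$.

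For the forward implication I would suppose $v_p(x)=-m$ with $m\geq 1$ and compare the valuations of the three terms on the right-hand side of $y^2=x^3+Ax+B$. Because $A,B\in\mathcal{O}_K$ we have $v_p(A)\geq 0$ and $v_p(B)\geq 0$, so
\begin{align*}
v_p(x^3) &= -3m, & v_p(Ax) &\geq -m, & v_p(B) &\geq 0.
\end{align*}
Since $m\geq 1$ gives $-3m<-m\leq v_p(Ax)$ and $-3m<0\leq v_p(B)$, the term $x^3$ strictly dominates, and the ultrametric (non-archimedean) property of $v_p$ forces $v_p(x^3+Ax+B)=-3m$. Hence $2v_p(y)=v_p(y^2)=-3m<0$, so $v_p(y)<0$, i.e. $p\mid den(y)$. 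As a by-product $m$ must be even for $v_p(y)$ to be an integer, which is the germ of the usual Nagell--Lutz filtration, but that refinement is not needed here.

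For the reverse implication I would argue by contraposition. If $p\nmid den(x)$ then $v_p(x)\geq 0$, and since $v_p(A),v_p(B)\geq 0$ as well, every term of $x^3+Ax+B$ has nonnegative valuation; hence $v_p(y^2)\geq 0$, giving $v_p(y)\geq 0$, that is $p\nmid den(y)$. Combining the two directions yields the stated equivalence.

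The only genuine content lies in setting up the valuation correctly and invoking unique factorization, and this is precisely where the class number one hypothesis is used: it guarantees that the denominator of an element of $K$, and which primes divide it, are well defined up to units. The numerical estimates are routine; the one point that deserves attention is the strict domination of the $x^3$ term, ensuring no cancellation can raise the valuation, which follows immediately from $m\geq 1$.
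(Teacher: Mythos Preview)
Your proof is correct and follows essentially the same route as the paper: both arguments amount to observing that when $v_p(x)<0$ the cubic term $x^3$ strictly dominates $Ax$ and $B$ in valuation, forcing $v_p(y^2)=3v_p(x)$, while if $v_p(x)\geq 0$ all terms are integral so $v_p(y)\geq 0$. The paper carries this out with explicit fractions $x=x_1/(p^r x_2)$, $y=y_1/(p^s y_2)$ and reads off $2s=3r$, whereas you phrase the identical computation in valuation language and supply the converse explicitly (the paper merely asserts it); these are cosmetic differences, not a different method.
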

 \begin{proof}
     Let $x=\frac{x_1}{p^rx_2}$, $y=\frac{y_1}{p^sy_2}$, where $x_i,y_i\in{\mathcal{O}_K}~and~p\nmid x_1x_2, p\nmid y_1y_2$.\\
     If $p\vert den(x)$, then $r>0$ gives
     \begin{align*}
         \frac{y_1^2}{p^{2s}y_2^2}&=\left(\frac{x_1}{p^rx_2}\right)^3+A\left(\frac{x_1}{p^r y_2}\right)+B\\
         &=\frac{x_1^3+Ap^{2r}x_1x_2^2+Bp^{3r}x_2^3}{p^{3r}x_2^3}.
     \end{align*}
    Since $p\nmid x_1$ \\
     \begin{align*}
         p\nmid x_1^3+Ap^{2r}x_1x_2^2+Bp^{3r}x_2^3.
     \end{align*}
    This implies
    \begin{align*}
        2s=3r \implies s>0.
    \end{align*}
    So, $p\vert den(y) $. Converse is also true.\\
    Therefore, $2s=3r\in\mathbb Z\implies s=3q,r=2q,q\in \mathbb Z$.
 \end{proof}
 \begin{rmk}
Let $E:y^2=x^3+Ax+B,~~~~A,B\in{\mathcal{O}_K}$.\\
If $y\neq 0$, we will get\\
\begin{align*}
    \frac{1}{y}=\left(\frac{x}{y}\right)^3+A\left(\frac{x}{y}\right)\left(\frac{1}{y}\right)^2+B\left(\frac{1}{y}\right)^3.
\end{align*}
That is, $E\setminus \{(x,0)\}$ is transformed into $E^{'}:s=t^3+Ats^2+Bs^3$, where $s=\frac{1}{y}$ and $t=\frac{x}{y}$.\\
Therefore we can define a map, \begin{align*}
    \phi:E\setminus \{(x,0)\}\longrightarrow E^{'},
\end{align*} 
by,
\begin{align*}
    (x,y)\longrightarrow(t,s),~~~\infty \longrightarrow(0,0).
\end{align*}
\textbf{Note:} $\phi$ is injective.
 \end{rmk}
 \begin{lemma}\label{lemma2}
     Let $E:y^2=x^3+Ax+B$ with $A,B\in {\mathcal{O}_K}$. Any torsion point $(x,y)\in E(\mathbb Q(\sqrt{D}))$ of order $2$ has $x\in{\mathcal{O}_K}$ and $y=0$.
\end{lemma}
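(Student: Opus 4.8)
The plan is to split the statement into its two assertions and establish them in sequence: that $y=0$, which is forced by the group law, and that $x\in\mathcal{O}_K$, which is a question of integrality. First I would recall that a point $P=(x,y)\in E$ has order $2$ precisely when $P\neq\infty$ and $2P=\infty$, i.e.\ $P=-P$. For a curve in the short Weierstrass form $y^2=x^3+Ax+B$ the negation map is $(x,y)\mapsto(x,-y)$, so $P=-P$ gives $y=-y$. As we work in characteristic zero, $2y=0$ forces $y=0$.

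Next, substituting $y=0$ into the defining equation yields $x^3+Ax+B=0$, so $x$ is a root of the monic polynomial $t^3+At+B\in\mathcal{O}_K[t]$; hence $x$ is integral over $\mathcal{O}_K$. Since $x\in\mathbb{Q}(\sqrt{D})=K$ and $\mathcal{O}_K$ is the full ring of integers of $K$, it is integrally closed in $K$, and therefore $x\in\mathcal{O}_K$. Alternatively, and more in keeping with the preceding lemma, one may argue through Lemma~\ref{lemma1}: because $y=0$ has trivial denominator, no prime of $\mathcal{O}_K$ divides $den(y)$, so by Lemma~\ref{lemma1} none divides $den(x)$ either; as the class number is one, $\mathcal{O}_K$ is a unique factorization domain, and an element of $K$ whose denominator has no prime factor necessarily lies in $\mathcal{O}_K$.

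I do not anticipate a serious obstacle: the depth of the result is small and the work is essentially bookkeeping. The one point requiring care is the second step, where the passage from ``integral over $\mathcal{O}_K$'' to ``lying in $\mathcal{O}_K$'' should not be taken for granted—it rests on $\mathcal{O}_K$ being integrally closed, equivalently on the class-number-one hypothesis that legitimizes the denominator argument via Lemma~\ref{lemma1}. Everything else follows at once from the group law. Note that this lemma only delivers $x\in\mathcal{O}_K$, which is weaker than the $\mathbb Z(\sqrt{D})$ conclusion of Theorem~\ref{main thm}; sharpening $\mathcal{O}_K$ to $\mathbb Z(\sqrt{D})$ in the fields with $D\equiv 1 \pmod 4$ is deferred to the later arguments.
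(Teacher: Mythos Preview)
Your argument is correct and the first half (deducing $y=0$ from $P=-P$) is exactly what the paper does. For the integrality of $x$, however, the paper takes a different, more hands-on route: it writes $x=a/b$ with $a,b\in\mathcal{O}_K$ coprime, clears denominators in $x^3+Ax+B=0$ to get $a^3\in\langle b\rangle$, and then uses the PID property (from class number one) to conclude $\langle b\rangle\supseteq\langle a,b\rangle^3=\langle 1\rangle$, so $b$ is a unit. Your appeal to integral closure is cleaner and in fact more general: $\mathcal{O}_K$ is integrally closed for \emph{any} number field $K$, not just those of class number one, so your main argument does not actually need that hypothesis---contrary to what you write in your last paragraph. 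The paper's argument, by contrast, genuinely uses the PID structure. Your alternative via Lemma~\ref{lemma1} is a little delicate, since that lemma's proof writes $y=y_1/(p^s y_2)$ with $p\nmid y_1y_2$, which does not parse when $y=0$; I would drop that aside and keep the integral-closure line, which is both shorter and stronger than the paper's.
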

 \begin{proof}
     Let $P=(x,y)$ has order $2$.\\
    In other words,
    \begin{align*}
        2P=\infty,
    \end{align*}
    gives
    \begin{align*}
        P=-P.
    \end{align*}
    \\
    So we can conclude that $y=0$.\\
    Therefore, $x$ is a root of $x^3+Ax+B=0,~~~ A,B\in{\mathcal{O}_K}$.\\
    Since, $P\in E(\mathbb Q(\sqrt{D}))$, we will get $x\in\mathbb Q(\sqrt{D})$.\\
    Let
    \begin{align*}
        x=\frac{a+b\sqrt{D}}{c+d\sqrt{D}}.
    \end{align*}
    Our elliptic curve becomes,
    \begin{align*}
        0=\left(\frac{a+b\sqrt{D}}{c+d\sqrt{D}}\right)^3+A\left(\frac{a+b\sqrt{D}}{c+d\sqrt{D}}\right)+B.
    \end{align*}
    Let $a+b\sqrt{D}=a, c+d\sqrt{D}=b$, then
    \begin{align*}
        0&=\left(\frac{a}{b}\right)^3+A\left(\frac{a}{b}\right)+B\\
        &=a^3+Aab^2+Bb^3.
    \end{align*}
    This implies $a^3\in<b>$, but we have $<a,b>=<c>$ (since we are working on PID and $a$ and $b$ are relatively prime).\\
    So
    \begin{align*}
        <a,b>=<1>.
    \end{align*}
    Again we have
    \begin{align*}
        <b>=<a^3,b>\supseteq <a,b>^3=<1>,
    \end{align*}
    and hence $b$ is a unit in $\mathbb Q(\sqrt{D})$.\\
    Therefore,
    \begin{align*}
        x=\frac{a}{b}\in {\mathcal{O}_K}.
    \end{align*}
 \end{proof}
\begin{definition}
    For any $x\neq 0\in \mathbb Q(\sqrt{D})$, $p$-adic value of $x$ is
    \begin{align*}
        g_p(x)=g_p(\frac{a}{b})=r
    \end{align*}
    where\\
    \begin{align*}
       \frac{a}{b}=p^r\frac{a_1}{b_1},~a,~b,~a_1,~b_1\in{\mathcal{O}_K},~p\nmid a_1b_1. 
    \end{align*}
    
    Therefore by Lemma \ref{lemma1}, we have
    \begin{align*}
        g_p(x)=-2q, g_p(y)=-3q.
    \end{align*}
    Define\\
    \begin{align*}
         E_r=\{(x,y)\in E(\mathbb Q(\sqrt{D})):g_p(x)\leq -2r, g_p(y)\leq -3r\}\bigcup \{\infty\}.
    \end{align*}
   
    \smallskip
    Clearly, $E(\mathbb Q(\sqrt{D}))\supseteq E_1\supseteq E_2\supseteq\ldots$
\end{definition}
\begin{lemma}\label{lemma3}
$(x,y)\in E_r$ if and only if $p^{3r}\vert s$. If $p^{3r}\vert s$ then $p^{r}\vert t$.
\end{lemma}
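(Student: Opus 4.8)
The plan is to convert each condition appearing in the statement into an inequality on the $p$-adic valuation $g_p$ and then use Lemma \ref{lemma1} to show that the two inequalities defining $E_r$ are in fact a single condition. To set up the valuation bookkeeping, I would first note that any affine point $(x,y)\in E(\mathbb{Q}(\sqrt{D}))$ lying in $E_r$ must satisfy $g_p(x)<0$, so $p\mid den(x)$, and Lemma \ref{lemma1} then forces $g_p(x)=-2q$ and $g_p(y)=-3q$ for a common integer $q>0$. Since $s=\frac{1}{y}$ and $t=\frac{x}{y}$, this immediately yields $g_p(s)=-g_p(y)=3q$ and $g_p(t)=g_p(x)-g_p(y)=-2q+3q=q$.

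With this in hand, the equivalence $(x,y)\in E_r \Leftrightarrow p^{3r}\mid s$ becomes a matter of comparing exponents. Membership in $E_r$ demands $g_p(x)\leq -2r$ and $g_p(y)\leq -3r$; substituting $g_p(x)=-2q$ and $g_p(y)=-3q$, each inequality collapses to the single statement $q\geq r$, so the two defining conditions are not independent. On the other side, $p^{3r}\mid s$ means $g_p(s)\geq 3r$, i.e. $3q\geq 3r$, again exactly $q\geq r$. Both sides of the claimed equivalence are therefore equivalent to $q\geq r$. The point at infinity is handled by the convention $\phi(\infty)=(0,0)$: here $s=0$ and $p^{3r}\mid 0$ holds trivially, consistent with $\infty\in E_r$.

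The second assertion follows at once: if $p^{3r}\mid s$ then $q\geq r$ by the computation above, and since $g_p(t)=q$ we obtain $g_p(t)\geq r$, that is $p^{r}\mid t$.

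The calculations here are entirely routine valuation arithmetic, and the only step carrying real content is the observation that, thanks to the rigidity $g_p(x)=-2q$, $g_p(y)=-3q$ supplied by Lemma \ref{lemma1}, the pair of inequalities in the definition of $E_r$ reduces to the single condition $q\geq r$. Without that rigidity the condition $g_p(x)\leq -2r$ (a constraint on $x$) and the condition $p^{3r}\mid s$ (a constraint on $y$ alone) would not be interchangeable, so this is where the argument really rests. In writing it up I would state explicitly that $r\geq 1$ guarantees we are in the regime $q>0$ where Lemma \ref{lemma1} applies, and dispose of $\infty$ as above.
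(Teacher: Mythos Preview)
Your proof is correct and follows essentially the same approach as the paper: both arguments reduce the defining inequalities of $E_r$ and the divisibility $p^{3r}\mid s$ to the single condition $q\geq r$ via the relation $g_p(x)=-2q$, $g_p(y)=-3q$ supplied by Lemma~\ref{lemma1}. The paper phrases this by writing out the fractions for $y$ and $s$ explicitly, whereas you work directly with valuations; your treatment of the point at infinity is a small addition the paper omits.
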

\begin{proof}
    If $(x,y)\in E_r$, then by definition $g_p(y)\leq-3r$.\\
    So we can write $y=\frac{y_1}{p^{3r}y_2},~~p\nmid y_1$. \\Then $s=\frac{p^{3r}y_2}{y_1}$ $\implies p^{3r}\vert s$.\\ 
    \smallskip
    Conversely, suppose $p^{3r}\vert s$ then $p^{3r}\vert den(y)$. By Lemma \ref{lemma1}, we have $p^{2r}\vert den(x)$. 
    \begin{align*}
         \implies (x,y)\in E_r.
    \end{align*}
    
    \smallskip
    So, if $p^{3r}\vert s\implies (x,y)\in E_r$, then the exact power of $p$ dividing $den(y)$ is $p^{3k}$ for some $k\geq r$. By Lemma \ref{lemma1}, $p^{2k}$ is the exact power of $p$ dividing $den(x)$. Since $t=\frac{x}{y}$, $p^k\vert t$. Thus $p^r\vert t$, as $k\geq r$.
    
\end{proof}
\begin{lemma}\label{lemma 4}
Any vertical line $t=k$, where $k$ is a constant such that $p\vert k$, intersects $E^{'}$ in at most one point $(t,s)$ with $p\vert s$. This line is not tangent at this point of intersection.
\begin{proof}
    Suppose, the line intersects $E^{'}$ at $2$ points $(t_1,s_1),(t_2,s_2)\in E^{'}$, such that $p\vert s_i$ and $s_1\equiv s_2\equiv 0(p)$. Write $s_i=ps_i^{'}$.\\
    \smallskip
    Suppose that
    \begin{align*}
        p^k\vert s_1-s_2  .
    \end{align*}for some $k\geq 1$,\\
    i.e.,
    \begin{align*}
        s_1\equiv s_2(p^k) 
    \end{align*}for some $k\geq 1$.\\
    Consequently,
    \begin{align*}
        s_1^{'}\equiv s_2^{'}(p^{k-1}).
    \end{align*}
    Hence,\\
    \smallskip
    \begin{align*}
        s_1^2\equiv s_2^2(p^{k+1}).
    \end{align*}
    Similarly,
    \begin{align*}
         s_1^3\equiv s_2^3(p^{k+2}).
    \end{align*}
    Therefore,
    \begin{align*}
        s_1=k^3+Aks_2^2+Bs_1^3\equiv k^3+Aks_2^2+Bs_2^3\equiv s_2(p^{k+1}).
    \end{align*}
    By induction $s_1\equiv s_2(p)$ for all $n\geq 1$ and hence $s_1=s_2$.\\
    Slope of the tangent line to the curve is 
    \begin{align*}
        \frac{ds}{dt}&=3t^2+As^2+2Ast\frac{ds}{dt}+3Bs^2\frac{ds}{dt}\\
        &=\frac{3t^2+As^2}{1-2Ast-3Bs^2}.
    \end{align*}
    If the line $t=k$ is tangent to the curve at $(s,t)$, then\\
    \begin{align*}
        1-2Ast-3Bs^2=0
    \end{align*} 
    but $s\equiv t\equiv 0(p)$. Thus
    \begin{align*}
        1-2Ast-3Bs^2\equiv 1.
    \end{align*}
    Therefore, $t=k$ is not tangent to the curve.
    
\end{proof}
    
\end{lemma}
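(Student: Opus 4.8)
The plan is to reduce both assertions to a single observation modulo $p$. Throughout I work in $\mathcal{O}_K$, which is a principal ideal domain because $K$ has class number one, so divisibility by and congruence modulo the prime $p$ are well behaved. The points $(k,s)$ under consideration lie in $E_1$, so by Lemma \ref{lemma3} the coordinates $t=k$ and $s$ are genuine elements of $\mathcal{O}_K$ satisfying $p\mid k$ and $p\mid s$; I would record this at the outset so that the divisibility hypotheses have literal meaning.

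For the uniqueness half, suppose $(k,s_1)$ and $(k,s_2)$ both lie on $E'$ with $p\mid s_1$ and $p\mid s_2$. Subtracting the two instances of the defining relation $s_i=k^3+Aks_i^2+Bs_i^3$ and factoring out $s_1-s_2$, I would obtain
\begin{align*}
(s_1-s_2)\bigl[\,1-Ak(s_1+s_2)-B(s_1^2+s_1s_2+s_2^2)\,\bigr]=0.
\end{align*}
Because $p\mid k$ and $p\mid s_1,s_2$, every term in the bracket other than the leading $1$ is divisible by $p$, so the bracket is $\equiv 1\pmod p$; in particular it is nonzero (otherwise $p$ would divide $1$). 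Cancelling it in the domain $\mathbb Q(\sqrt D)$ forces $s_1=s_2$, which is exactly the claimed uniqueness.

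For the non-tangency half, differentiating $s=t^3+Ats^2+Bs^3$ implicitly and solving for the slope gives
\begin{align*}
\frac{ds}{dt}=\frac{3t^2+As^2}{1-2Ats-3Bs^2}.
\end{align*}
The vertical line $t=k$ meets the curve tangentially at $(k,s)$ precisely when this slope is infinite, i.e.\ when $1-2Aks-3Bs^2=0$. But the same reasoning as before shows $1-2Aks-3Bs^2\equiv 1\pmod p$, so the denominator is nonzero, the slope is finite, and the line is transverse. It is worth noting that the two halves are governed by essentially the same unit-modulo-$p$ expression, which is what makes them collapse into one computation.

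The algebra here is routine; the genuine care is in the setup rather than the manipulation. The main obstacle I anticipate is justifying that $p\mid s$ and $p\mid k$ are meaningful integral statements—this is where Lemma \ref{lemma3} and membership in $E_1$ are needed—and in making sure the cancellation step is legitimate, which relies on $\mathcal{O}_K$ being an integral domain (indeed a PID) so that a factor congruent to $1$ modulo $p$ may be divided out without loss.
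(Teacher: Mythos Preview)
Your argument is correct, and for the non-tangency clause it coincides with the paper's. For the uniqueness clause, however, you take a cleaner route than the paper does. The paper argues by induction on the exponent: assuming $s_1\equiv s_2\pmod{p^n}$, it uses $p\mid s_i$ to upgrade $s_1^2\equiv s_2^2\pmod{p^{n+1}}$ and $s_1^3\equiv s_2^3\pmod{p^{n+2}}$, substitutes into the curve equation, and bootstraps to $s_1\equiv s_2\pmod{p^{n+1}}$, concluding $s_1=s_2$ from congruence modulo all powers of $p$. You instead subtract the two curve equations once and factor out $s_1-s_2$, observing that the cofactor $1-Ak(s_1+s_2)-B(s_1^2+s_1s_2+s_2^2)$ is a $p$-adic unit and hence nonzero, so $s_1-s_2=0$ directly. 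Your approach is shorter and isolates the essential point (the same ``unit modulo $p$'' phenomenon that drives the tangency half), whereas the paper's inductive version, while valid, obscures this. One small remark: Lemma~\ref{lemma3} does not literally place $s$ and $t$ in $\mathcal{O}_K$---other primes may still occur in denominators---but your argument only needs that $g_p(s),g_p(t)\ge 1$ so that the cofactor has $g_p$ equal to zero, and that much Lemma~\ref{lemma3} does supply; the cancellation then takes place in the field $\mathbb{Q}(\sqrt{D})$, so no integrality is required.
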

\begin{lemma}\label{lemma 5}
Suppose the line $s=\alpha t+\beta$ intersects the curve at the points $(t_1,s_1)$ and $(t_2,s_2)$ in $E^{'}$. Then
\begin{align*}
    \alpha=\frac{t_2^{2}t_1 t_2+t_1^{2}+As_2^{2}}{1-A(s_1+s_2)t_1-B(s_2^{2}+s_1 s_2+s_1^{2})}.
\end{align*}
\end{lemma}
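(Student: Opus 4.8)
The plan is to exploit that the two intersection points lie simultaneously on $E'$ and on the line, and to compute the slope as the difference quotient $\alpha = (s_1 - s_2)/(t_1 - t_2)$ read off directly from the curve equation. Since $(t_i, s_i) \in E'$ we have $s_i = t_i^3 + A t_i s_i^2 + B s_i^3$ for $i = 1, 2$, and since both points lie on $s = \alpha t + \beta$ we have $s_1 - s_2 = \alpha(t_1 - t_2)$. The strategy is to subtract the two curve relations, factor each resulting difference so as to extract a common factor of $(t_1 - t_2)$, substitute $s_1 - s_2 = \alpha(t_1 - t_2)$, cancel $(t_1 - t_2)$, and finally solve the resulting linear equation for $\alpha$.

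First I would subtract the two instances of the curve equation to obtain
\begin{align*}
s_1 - s_2 = (t_1^3 - t_2^3) + A(t_1 s_1^2 - t_2 s_2^2) + B(s_1^3 - s_2^3).
\end{align*}
The two cubic differences factor cleanly as $t_1^3 - t_2^3 = (t_1 - t_2)(t_1^2 + t_1 t_2 + t_2^2)$ and $s_1^3 - s_2^3 = (s_1 - s_2)(s_1^2 + s_1 s_2 + s_2^2)$, and in the latter I will then replace $s_1 - s_2$ by $\alpha(t_1 - t_2)$.

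The step I expect to be the main obstacle is the mixed term $t_1 s_1^2 - t_2 s_2^2$, which is not symmetric in the two points and does not factor in a single stroke. The trick is to split it by adding and subtracting $t_1 s_2^2$, writing
\begin{align*}
t_1 s_1^2 - t_2 s_2^2 = t_1(s_1^2 - s_2^2) + (t_1 - t_2)s_2^2 = t_1(s_1 - s_2)(s_1 + s_2) + (t_1 - t_2)s_2^2.
\end{align*}
This asymmetric split is precisely what produces the $A s_2^2$ appearing in the numerator and the factor $A t_1(s_1 + s_2)$ appearing in the denominator of the claimed formula.

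Finally, I would substitute $s_1 - s_2 = \alpha(t_1 - t_2)$ into every factored term, cancel the common factor $(t_1 - t_2)$ — legitimate because the two intersection points are distinct and the line is non-vertical, forcing $t_1 \neq t_2$ — and then collect all occurrences of $\alpha$ on one side. Solving the resulting linear equation gives
\begin{align*}
\alpha = \frac{t_1^2 + t_1 t_2 + t_2^2 + A s_2^2}{1 - A(s_1 + s_2)t_1 - B(s_1^2 + s_1 s_2 + s_2^2)},
\end{align*}
which is the asserted identity (the numerator of the statement being $t_1^2 + t_1 t_2 + t_2^2 + A s_2^2$). No result beyond the defining equation of $E'$ is needed; the entire content is the careful bookkeeping of the factorizations, with the mixed term being the only genuinely delicate piece.
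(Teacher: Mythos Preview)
Your argument is correct and is essentially the same computation as the paper's: both subtract the two curve relations and isolate the slope, the only cosmetic difference being that the paper starts from the product $(s_2-s_1)\bigl(1-A(s_1+s_2)t_1-B(s_2^2+s_1s_2+s_1^2)\bigr)$ and works toward the factored form, whereas you start from $s_1-s_2$ and factor forward. The asymmetric split of the mixed term $t_1s_1^2-t_2s_2^2$ is exactly the same device in both.

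The one point on which the paper goes further is the degenerate case $t_1=t_2$. You dispose of it by asserting the two intersection points are distinct, but the lemma is also meant to cover the tangent situation $(t_1,s_1)=(t_2,s_2)$, which is needed later when doubling a point in $E_r'$. The paper handles this by invoking Lemma~\ref{lemma 4} to conclude that $t_1=t_2$ forces $s_1=s_2$, and then checks that the claimed expression specializes to the implicit-differentiation slope $\dfrac{ds}{dt}=\dfrac{3t^2+As^2}{1-2Ast-3Bs^2}$. You should add this case to make the proof complete for its intended use.
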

\begin{proof}
    Suppose $t_1\neq t_2$ then $\alpha=\frac{s_2-s_1}{t_2-t_1}$.\\
    We have $s_i=t_i^3+At_is_i^2+Bs_i^3$,\\
    which gives,
    \begin{align*}
        (s_2-s_1)(1-A(s_1+s_2)t_1-B(s_2^2+s_1s_2+s_1^2))&=(s_2-s_1)-A(s_2^2-s_1^2)t_1-B(s_2^3-s_1^3)\\
        &=(s_2-As_2^2t_2-Bs_2^3)-(s_1-As_1^2t_1-Bs_1^3)+As_2^2(t_2-t_1)\\
        &=t_2^3-t_1^3+As_2^2(t_2-t_1)\\
        &=(t_2-t_1)(t_2^2+t_1 t_2+t_1^2+As_2^2),
    \end{align*}
    so,
    \begin{equation}\label{alpha}
        \frac{s_2-s_1}{t_2-t_1}=\alpha=\frac{t_2^2+t_1 t_2+t_1^2+As_2^2}{1-A(s_1+s_2)t_1-B(s_2^2+s_1s_2+s_1^2)}.\\
   \end{equation}
    If $t_1=t_2$, then by Lemma \ref{lemma 4} both points are identical. By differentiation we will get the tangent line as
    \begin{align*}
        \frac{ds}{dt}=\frac{3t^2+As^2}{1-2Ast-3Bs^2},
    \end{align*}
    which is same as our expression when $t=t_1=t_2$ and $s=s_1=s_2$.
\end{proof}
\begin{lemma}\label{lemma6}
Let $E_r^{'}=\phi(E_r)$ and $E^{'}(\mathbb Q(\sqrt{D}))$. Then $E_r^{'}$ is a subgroup of $E^{'}(\mathbb Q(\sqrt{D}))$.
\end{lemma}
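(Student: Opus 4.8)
The plan is to verify the three subgroup axioms for $E_r'$ inside $E'(\mathbb{Q}(\sqrt{D}))$, working throughout in the group law transported by $\phi$. Since $\phi$ is injective and carries lines to lines (the substitution $t=x/y$, $s=1/y$ sends $y=\lambda x+\nu$ to $\nu s=1-\lambda t$), three points of $E'$ are collinear exactly when the corresponding points of $E$ sum to $\infty$; hence on $E'$ three collinear points sum to the identity $(0,0)=\phi(\infty)$. In these coordinates Lemma~\ref{lemma3} identifies $E_r'=\{(t,s)\in E':g_p(s)\geq 3r\}$ (and such points also satisfy $g_p(t)\geq r$), and the identity $(0,0)$ lies in every $E_r'$.

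First I would dispose of the easy axioms. The negation on $E$ is $(x,y)\mapsto(x,-y)$, which under $\phi$ becomes $(t,s)\mapsto(-t,-s)$; since $g_p(-s)=g_p(s)\geq 3r$, inverses stay in $E_r'$. It remains to prove closure: given $P_1=(t_1,s_1)$ and $P_2=(t_2,s_2)$ in $E_r'$, I must show $P_1+P_2\in E_r'$.

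For closure, the idea is to control the $p$-adic valuations of the line through $P_1$ and $P_2$ and of the third intersection point. Write the line as $s=\alpha t+\beta$. Using $g_p(t_i)\geq r$, $g_p(s_i)\geq 3r$ together with $A,B\in\mathcal{O}_K$ (so $g_p(A),g_p(B)\geq 0$), the expression for $\alpha$ in Lemma~\ref{lemma 5} has numerator of valuation at least $2r$ and denominator congruent to $1$ modulo a high power of $p$, hence a $p$-adic unit, giving $g_p(\alpha)\geq 2r$. Then $\beta=s_1-\alpha t_1$ satisfies $g_p(\beta)\geq 3r$. Substituting $s=\alpha t+\beta$ into $E':s=t^3+Ats^2+Bs^3$ yields a cubic in $t$ whose leading coefficient $1+A\alpha^2+B\alpha^3$ is again a unit and whose $t^2$-coefficient has valuation at least $5r$; thus $t_1+t_2+t_3$ has valuation at least $5r$, so $t_3=(t_1+t_2+t_3)-t_1-t_2$ has $g_p(t_3)\geq r$ by the ultrametric inequality. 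Finally $s_3=\alpha t_3+\beta$ has $g_p(s_3)\geq 3r$, so $(t_3,s_3)\in E_r'$, whence $P_1+P_2=-(t_3,s_3)\in E_r'$.

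The main obstacle is the valuation bookkeeping in the closure step: I must confirm that every denominator that appears (in $\alpha$ and in the leading cubic coefficient) is genuinely a $p$-adic unit. This is precisely where the class-number-one hypothesis enters, since it makes $\mathcal{O}_K$ a PID and $g_p$ an honest discrete valuation on $\mathbb{Q}(\sqrt{D})$, so that $g_p(uv)=g_p(u)+g_p(v)$ and the ultrametric inequality are available. I would also check the degenerate cases (one summand equal to the identity, $P_1=-P_2$, or the tangent case $t_1=t_2$), where Lemma~\ref{lemma 4} guarantees the line is transversal and Lemma~\ref{lemma 5} still applies.
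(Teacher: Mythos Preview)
Your proposal is correct and follows essentially the same route as the paper: both arguments use Lemma~\ref{lemma 5} to bound $g_p(\alpha)\geq 2r$ and $g_p(\beta)\geq 3r$, substitute the line into $E'$ to get a cubic in $t$ with unit leading coefficient and $t^2$-coefficient of valuation at least $5r$, and conclude via the sum-of-roots that the third intersection point lies in $E_r'$. You are somewhat more explicit than the paper in verifying the identity, inverses, and degenerate cases, but the substance of the closure argument is identical.
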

\begin{proof}
    Let $(x,y)\in E_r,\phi(x,y)=(t,s)$.\\
    We have $p^{3r}\vert s$ and $p^r\vert t$ and from \eqref{alpha}, we can conclude that $p^{2r}\vert \alpha$. Also since $\beta=s-\alpha t$, we will get $p^{3r}\vert \beta$. For $p_1,p_2\in E_r^{'}$ and $p_1+p_2=p_3$, let $s=\alpha t+\beta$ be the line passing through $p_1$ and $p_2$, this will implies that
    \begin{align*}
        \alpha t+\beta=t^3+At(\alpha t+\beta)^2+B(\alpha t+\beta)^3,
    \end{align*}
    then
    \begin{align*}
        0=t^3(1+\alpha^2 A+\alpha^3 B)+t^2(2A\alpha \beta+3\alpha^2\beta B)+\ldots
    \end{align*}
    Let $p^*=(t^*,s^*)$ be the new point of intersection. Then $p_3=(t_3,s_3)=-p^*$. Which gives
    \begin{align*}
        t_1+t_2-t_3=t_1+t_2+t^*=-\frac{2A\alpha\beta+2B\alpha^2\beta}{1+\alpha^2A+\alpha^3 B}\equiv 0(p^{5r}).
    \end{align*}
    Since $p^r\vert t_1$ for $i=1,2\implies p^r\vert \pm t_3$. Also $s_3=\alpha t_3+\beta\equiv 0(p^{3r})$. Then by Lemma \ref{lemma1}, $\pm p_3\in E_r^{'}\implies E_r^{'}$ is a subgroup of $E^{'}(\mathbb Q(\sqrt{D}))$.\\
    \end{proof}
    \begin{rmk}
    For $p=(t,s)\in E_r^{'}$, write $t(p)=t$. Then we have, $t(p_1)+t(p_2)=t(p_3)(mod~~p^{5r})$. Suppose $p\in E_1^{'}$ has order $m$.
    Assume that $p\nmid m$, then there exists $r>0$ such that $p\in E_r$ but $p\not\in E_{r+1}$.\\
    If $p\vert m$, then $t(mp)=m.t(p)(mod~~p^{5r})$.\\
    So,\begin{align*}
        0=m.t(p)(mod~~p^{5r})\implies p^{5r}\vert t(p).
    \end{align*}
    Therefore,
    \begin{align*}
        p^{5r}\vert{t(p)},
    \end{align*}
    which is a contradiction, since $5r>r$. So $E_1^{'}$ has no point of finite order. Now if $(x,y)\in E(\mathbb Q(\sqrt{D}))$ is a torsion point, then $\phi(x,y)$ must have finite order in $E_1^{'}$, which contradicts the last lemma. Hence, we have the following proposition.
    \end{rmk}
\begin{prop}\label{proposition}
    If $(x,y)\in E(\mathbb Q(\sqrt{D}))$ is of finite order, which are not a $2$-torsion over $\mathbb{Q}(\sqrt{D})$,  then both $x,y\in{\mathcal{O}_K}$.
\end{prop}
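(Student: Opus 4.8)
The plan is to run the classical descent argument that underlies every Nagell--Lutz type theorem, leaning on the filtration $E_1\supseteq E_2\supseteq\cdots$ and on the fact — already recorded in the remark following Lemma \ref{lemma6} — that $E_1'$ contains no point of finite order other than the identity $(0,0)$. Concretely, I would argue by contradiction: if a torsion point $(x,y)$ had a prime in the denominator of $x$, then the whole point would be forced into $E_1'$ under $\phi$, where it would still have finite order, contradicting the torsion-freeness of $E_1'$.

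First I would dispose of the degenerate cases. If $(x,y)=\infty$ there is nothing to prove, and if $(x,y)$ has order $2$ then Lemma \ref{lemma2} already gives $x\in\mathcal{O}_K$ and $y=0\in\mathcal{O}_K$. So assume $(x,y)$ is a torsion point of finite order $m$ with $y\neq 0$. Suppose, for contradiction, that some prime $p\in\mathcal{O}_K$ divides $den(x)$; by Lemma \ref{lemma1} it then divides $den(y)$ as well, so $g_p(x)\leq -2$ and $g_p(y)\leq -3$. Choosing $r\geq 1$ maximal with $g_p(x)\leq -2r$, we get $(x,y)\in E_r\subseteq E_1$, and hence $\phi(x,y)\in E_r'\subseteq E_1'$, which is a subgroup of $E'(\mathbb Q(\sqrt D))$ by Lemma \ref{lemma6}.

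Next I would transport the order through $\phi$: since the group law on $E'$ is the one carried over from $E$ along the injective map $\phi$, the image $\phi(x,y)$ is again a point of finite order $m$ lying in the subgroup $E_1'$. But the remark after Lemma \ref{lemma6} shows, via the congruence $t(p_1)+t(p_2)\equiv t(p_3)\pmod{p^{5r}}$ applied to the multiples of such a point, that any finite-order element of $E_1'$ would satisfy $p^{5r}\mid t(p)$ with $5r>r$, which is impossible unless it is the identity. This is the desired contradiction. Therefore no prime of $\mathcal{O}_K$ divides $den(x)$, and by Lemma \ref{lemma1} none divides $den(y)$ either; writing $x$ and $y$ in lowest terms over $\mathcal{O}_K$ — possible since $\mathcal{O}_K$ is a PID for these nine values of $D$ — the denominators have no prime divisors and are thus units, so $x,y\in\mathcal{O}_K$.

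The step I expect to require the most care is the transfer of finite order through $\phi$ together with the bookkeeping of the maximal exponent $r$: one must verify that $\phi$ is genuinely a group isomorphism onto its image (not merely injective as a set map), so that the order of $\phi(x,y)$ really equals the order of $(x,y)$ and the contradiction inside $E_1'$ is legitimate; one should also confirm that the maximal $r$ is well defined, i.e.\ that the $p$-adic valuations are bounded below. Everything else is a routine assembly of Lemmas \ref{lemma1}, \ref{lemma2} and \ref{lemma6} with the torsion-freeness of $E_1'$ established in the remark, followed by the unique-factorization step that converts ``no prime in the denominator'' into genuine integrality.
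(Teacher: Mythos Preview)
Your proposal is correct and follows essentially the same route as the paper: the proposition is stated immediately after the remark following Lemma~\ref{lemma6}, and that remark \emph{is} the paper's proof --- it establishes that $E_1'$ is torsion-free and then observes that a torsion point whose coordinates had a prime in the denominator would map under $\phi$ to a nontrivial torsion point of $E_1'$, a contradiction. Your write-up is in fact tidier than the paper's, since you explicitly isolate the order-$2$ case (where $\phi$ is undefined) via Lemma~\ref{lemma2}, set up the contradiction hypothesis and the maximal $r$ carefully, and spell out the final step from ``no prime in the denominator'' to integrality using that $\mathcal{O}_K$ is a PID.
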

\begin{rmk}
    We have $\mathcal{O}_K=\mathbb Z(\sqrt{D})$ for $D=-1,-2$ and $\mathcal{O}_K=\mathbb Z(\frac{1+\sqrt{D}}{2})$ for $D=-3, -7, -11,-19, -43, -67, -163$.
\end{rmk}
\textbf{Claim:} For $D=-3, -7,-11$, if $x,y$ on $E({\mathbb Q(\sqrt{D}))}$ is a point of finite order which is not a $2$-torsion defined over $\mathbb{Q}(\sqrt{D})$ then $x,y\in \mathbb Z(\sqrt{D})$
\begin{proof}
    Since $x,y\in \mathbb Z(\frac{1+\sqrt{D}}{2})$, we can take $x=a+b(\frac{1+\sqrt{D}}{2})$ and $y=c+d(\frac{1+\sqrt{D}}{2})$. That is,\\
    our \textbf{claim} will be $b$ and $d$ are even.\\
    We have from \eqref{Elliptic curve1},\\
    \begin{align*}
        y^2=x^3+Ax+B,~~~A=A_1+A_2\left(\frac{1+\sqrt{D}}{2}\right), B=B_1+B_2\left(\frac{1+\sqrt{D}}{2}\right),
    \end{align*}
    so,
    \begin{align*}
        \left(c+d(\frac{1+\sqrt{D}}{2})\right)^2&=\left(a+b\left(\frac{1+\sqrt{D}}{2}\right)\right)^3+A\left(a+b(\frac{1+\sqrt{D}}{2})\right)+B\\
        &=\left(a+b\left(\frac{1+\sqrt{D}}{2}\right)\right)^3+\left(A_1+A_2(\frac{1+\sqrt{D}}{2})\right)\left(a+b\left(\frac{1+\sqrt{D}}{2}\right)\right)\\&+\left(B_1+B_2\left(\frac{1+\sqrt{D}}{2}\right)\right).
    \end{align*}
    After expanding everything, we will get\\
    \begin{align*}
        c^2+\frac{d^2}{4}+\frac{d^2D}{4}+\frac{d^2\sqrt{D}}{2}+cd+cd\sqrt{D}&=a^3+\frac{a^2b}{2}+\frac{a^2b\sqrt{D}}{2}+\frac{ab^2}{4}+\frac{ab^2\sqrt{D}}{2}+\frac{ab^2D}{4}+\frac{b^3}{8}+\\&\frac{b^3\sqrt{D}}{4}+\frac{b^3D}{8}+\frac{b^3\sqrt{D}}{8}+\frac{b^3D}{4}+\frac{b^3D^{3/2}}{8}+A_1a+\frac{A_1b}{2}+\\&\frac{A_1b\sqrt{D}}{2}+\frac{A_2a}{2}+\frac{A_2a\sqrt{D}}{2}+\frac{A_2b}{4}+\frac{A_2bD}{4}+\frac{A_2bD^4}{2}.
    \end{align*}
    By comparing the real part, we will get
    \begin{align*}
        c^2+\frac{d^2}{4}+\frac{d^2D}{4}+cd&=a^3+\frac{a^2b}{2}+\frac{ab^2}{4}+\frac{ab^2D}{4}+\frac{b^3}{8}+\frac{b^3D}{4}+A_1a+\frac{A
        _1b}{2}+\frac{A_2a}{2}\\&+\frac{A_2b}{4}+\frac{A_2bD}{4}+\frac{A_2bD}{2},
    \end{align*}
    which will give,\\
    \begin{align*}
        2d^2+2d^2D\equiv 2ab^2+2ab^2D+b^3+b^3D+2b^3D+2A_2b+2A_2bD~(mod~4),
    \end{align*}
    $\implies$
    \begin{align*}
         -4d^2\equiv -4ab^2-2b^3-4A_2b~(mod~4),
    \end{align*}
    $\implies$
    \begin{align*}
        -2b^3\equiv0~(mod~4)
    \end{align*}
   $\implies b^3\equiv0~(mod~4)\implies b=0,2~(mod~4)$.\\ 
   Then $b=4k$ or $b=4k+2$.\\
    And by comparing the imaginary part, we get
    \begin{align*}
        \frac{d^2}{2}+cd=\frac{a^2b}{2}+\frac{ab^2}{2}+\frac{b^3}{8}+\frac{b^3D}{8}+\frac{A_1b}{2}+\frac{A_2a}{2},
    \end{align*}
    which gives\\
    \begin{align*}
        4d^2+8cd=4a^2b+4ab^2+3b^3+b^3D+4A_1b+4A_2a.
    \end{align*}
    For $b=4k$, we get
    \begin{align*}
        d^2+2cd=4a^2k^2+4^2k^2a+3\times4^2k^3+4^2k^3D+4A_1k+A_2a
    \end{align*}
    gives,
    \begin{equation}\label{A2}
        d^2\equiv A_2a~(mod~2).
    \end{equation}
    Since the discriminant of \eqref{Elliptic curve1} is non-zero, we have
    \begin{align*}
        4\left(A_1+A_2\left(\frac{1+\sqrt{D}}{2}\right)\right)^3+27\left(B_1+B_2\left(\frac{1+\sqrt{D}}{2}\right)\right)^2\neq 0
    \end{align*}
    \begin{align*}
       0&\neq 4A_1^3+\frac{A_1^2A_2}{2}+\frac{A_1^2A_2\sqrt{D}}{2}+\frac{A_1A_2^2}{2}+\frac{A_1A_2^2\sqrt{D}}{2}+\frac{A_2^3+DA_2^3+2A_2^3\sqrt{D}}{4}\\&+A_1^2A_2+A_1^2A_2\sqrt{D}+\frac{A_1A_2^2}{2}+\frac{DA_1A_2^2}{2}+A_1A_2^2\sqrt{D}+27B_1^2+\frac{27B_2^2}{4}+\frac{27DB_2^2}{4}\\&+\frac{27B_2^2\sqrt{D}}{2}B_1B_2+B_1B_2\sqrt{D},
    \end{align*}
    which gives,
    \begin{align*}
        2A_1^2A_2+2A_1A_2^2+A_3^2+2A_1A_2^2-6A_1A_2^2+3B_2^2-9DB_2^2\equiv 0~(mod~4),
    \end{align*}
    so,
    \begin{align*}
        2A_1^2A_2-4A_1A_2^2+A_2^3+2A_1A_2^2-6B_2^2\equiv0~(mod~2).
    \end{align*}
    $\implies A_2^3\equiv 0~(mod~2)\implies A_2\equiv 0~(mod~2)\implies A_2$ is even. \\
    Therefore, from \eqref{A2} we can conclude that $d$ is even. One can proceed the same way for $b=4k+2$. Hence our claim and we have the following theorem.
\end{proof}

\begin{theorem}\label{thm1}
    If $x,y\in \mathbb Q(\sqrt{D})$ is of finite order in $E({\mathbb Q(\sqrt{D}))}$, which are not $2$-torsion defined over $\mathbb{Q}(\sqrt{D})$, then $x,y\in \mathbb Z(\sqrt{D})$, where $D=-1, -2, -3,-7,-11$.
\end{theorem}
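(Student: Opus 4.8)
The plan is to assemble this theorem from the machinery already in place, splitting on the two shapes that the ring $\mathcal{O}_K$ can take. By Proposition \ref{proposition}, every point $(x,y)\in E(\mathbb Q(\sqrt{D}))$ of finite order already satisfies $x,y\in\mathcal{O}_K$; so the only task is to upgrade membership in $\mathcal{O}_K$ to membership in $\mathbb Z(\sqrt{D})$. For $D=-1,-2$ there is nothing to prove, since here $\mathcal{O}_K=\mathbb Z(\sqrt{D})$ exactly and the conclusion is immediate.

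For the remaining values $D=-3,-7,-11$ we have $\mathcal{O}_K=\mathbb Z\!\left(\frac{1+\sqrt{D}}{2}\right)$, which strictly contains $\mathbb Z(\sqrt{D})$. Writing $x=a+b\,\frac{1+\sqrt{D}}{2}$ and $y=c+d\,\frac{1+\sqrt{D}}{2}$ with $a,b,c,d\in\mathbb Z$, an element lies in $\mathbb Z(\sqrt{D})$ precisely when its half-integer part vanishes, that is, when the coefficient of $\frac{1+\sqrt{D}}{2}$ is even. Thus the whole theorem reduces to the single arithmetic assertion that $b$ and $d$ are even, which is exactly the Claim established just above.

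The mechanism of that Claim, which I would follow, is to substitute the expressions for $x,y$ (and the $\mathcal{O}_K$-coefficients $A,B$, written in the same basis) into $y^2=x^3+Ax+B$ and then separate the resulting identity into its rational and $\sqrt{D}$-components. Reducing the real part modulo $4$ collapses most terms and forces $b^3\equiv 0\pmod 4$, hence $b$ even; writing $b=4k$ (the parallel case $b=4k+2$ being handled identically) and reducing the imaginary part modulo $2$ yields $d^2\equiv A_2 a\pmod 2$. To close the argument I would invoke the nonvanishing of the discriminant $4A^3+27B^2\neq 0$: expanding it in the same basis and reducing modulo $2$ gives $A_2^3\equiv 0$, so $A_2$ is even, and therefore $d^2$—and hence $d$—is even.

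I expect the main obstacle to be purely computational: keeping the modular bookkeeping honest when separating real and imaginary parts, since the cube $\left(a+b\,\frac{1+\sqrt{D}}{2}\right)^3$ together with the product terms generate many half- and quarter-integer contributions that must be cleared by multiplying through by $4$ (or $8$) before reducing. The one genuinely non-routine input is the discriminant condition, which is what rules out the degenerate possibility that $A_2$ is odd and $d$ fails to be even; without it the congruence $d^2\equiv A_2 a$ would not force $d$ to be even.
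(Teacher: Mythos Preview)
Your proposal is correct and follows the paper's proof essentially line for line: invoke Proposition~\ref{proposition} to land in $\mathcal{O}_K$, dispose of $D=-1,-2$ by the remark that $\mathcal{O}_K=\mathbb Z(\sqrt{D})$ there, and for $D=-3,-7,-11$ appeal to the Claim, whose mechanism (substitute, split into rational and $\sqrt{D}$ parts, reduce the rational part mod~$4$ to force $b$ even, reduce the $\sqrt{D}$ part mod~$2$ to get $d^2\equiv A_2a$, and use the discriminant expansion to force $A_2$ even) you have reproduced accurately, including the deferral of the $b=4k+2$ case.
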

\begin{rmk}
    If we have $y=0$, that is $c=0=d$ then we have an elliptic curve over $\mathbb{Q}$, and that corresponds to the classical case. For example one may consider the following;\\
    Consider the elliptic curve $x^3-a^2x$ over $\mathbb{Q}(\sqrt{D})$, let $D=-7$. One can choose  $a$ such that $a\in \mathcal{O}_{\mathbb{Q}(\sqrt{D})}$ and $a^2\not\in \mathbb{Z}(\sqrt{D})$. Then $E$ will have the $2$-torsion $(a,0)$.
\end{rmk}

Now we are looking for non-ED case, i.e., for $D=-19, -43, -67, -163.$\\

\subsection{Extended Nagell-Lutz for non-ED}
\begin{definition}
     Let $\alpha,\beta\in{\mathcal{O}_K}$, we say that $\alpha$ is divisible by $\beta$, if  $\beta\in <\alpha>$.
\end{definition}
\begin{rmk}
    Since we are working on an imaginary quadratic extension of class number $1$, unique factorization is guaranteed. So, the above definition is well defined. Through out the paper, we are using the unique factorization property. 
\end{rmk}
\begin{rmk}
    Let \begin{equation*}
         E:y^2=x^3+Ax+B ~~with~~ A,B\in \mathbb Q(\sqrt{D}).
     \end{equation*}\\
     Then actually $A, B\in{\mathcal{O}_K}$.
\end{rmk}
\begin{lemma}\label{lemma7}
Let
\begin{equation*}
         E:y^2=x^3+Ax+B ~~with~~ A,B\in {\mathcal{O}_K}.
\end{equation*}
Then for any $(x,y)\in E(\mathbb Q(\sqrt{D}))$, $den(x)\in<p>$ if and only if $den(y)\in <p>$.
\end{lemma}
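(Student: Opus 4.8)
The plan is to run the argument of Lemma~\ref{lemma1} essentially verbatim, the only genuinely new point being to justify the bookkeeping in a ring that is a PID but not Euclidean. Since $K=\mathbb Q(\sqrt{D})$ has class number one, $\mathcal{O}_K$ is a unique factorization domain, so for each prime $p$ the $p$-adic value $g_p$ defined above is well defined on all of $\mathbb Q(\sqrt{D})$, and $den(x)\in\langle p\rangle$ is equivalent to $g_p(x)<0$. This is exactly what lets us reproduce the ED computation without invoking a division algorithm.

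First I would fix canonical representations with respect to $p$: using unique factorization, write
\begin{align*}
    x=\frac{x_1}{p^r x_2},\qquad y=\frac{y_1}{p^s y_2},
\end{align*}
with $x_i,y_i\in\mathcal{O}_K$, $p\nmid x_1x_2$, $p\nmid y_1y_2$, and $r,s\geq 0$ integers, so that $den(x)\in\langle p\rangle\iff r>0$ and $den(y)\in\langle p\rangle\iff s>0$. Substituting into $y^2=x^3+Ax+B$ and clearing denominators gives
\begin{align*}
    p^{3r}x_2^3\,y_1^2=p^{2s}y_2^2\bigl(x_1^3+Ap^{2r}x_1x_2^2+Bp^{3r}x_2^3\bigr).
\end{align*}

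Now suppose $den(x)\in\langle p\rangle$, i.e.\ $r>0$. Then in the parenthesised factor the terms $Ap^{2r}x_1x_2^2$ and $Bp^{3r}x_2^3$ lie in $\langle p\rangle$ (here $A,B\in\mathcal{O}_K$ contribute nonnegative $p$-adic value), whereas $p\nmid x_1^3$; by unique factorization $p\nmid\bigl(x_1^3+Ap^{2r}x_1x_2^2+Bp^{3r}x_2^3\bigr)$. Comparing the exact power of $p$ dividing each side, and using $p\nmid x_2^3y_1^2$ and $p\nmid y_2^2$, we obtain $3r=2s$; hence $s>0$ and $den(y)\in\langle p\rangle$. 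For the converse I would argue by contradiction: if $s>0$ but $r=0$, the left side of the displayed identity has $p$-adic value $0$ while the right side has value at least $2s>0$, which is impossible, so $r>0$. This proves the equivalence and, as a by-product, records $3r=2s$, so $r=2q$, $s=3q$. The only step that is not purely formal is the strict comparison of $p$-adic values, and that is precisely where the class number one (UFD) hypothesis is needed, taking over the role played by the Euclidean/gcd reasoning in the earlier cases.
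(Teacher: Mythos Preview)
Your proof is correct and follows essentially the same approach as the paper's: the same $p$-adic representations $x=x_1/(p^r x_2)$, $y=y_1/(p^s y_2)$, the same substitution, and the same conclusion $3r=2s$ from indivisibility of $x_1^3+Ap^{2r}x_1x_2^2+Bp^{3r}x_2^3$ by $p$. If anything, you are slightly more explicit than the paper, which disposes of the converse with a bare ``Converse is also true,'' whereas you give the clean $p$-adic valuation contradiction.
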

\begin{proof}
     Let $x=\frac{x_1}{p^rx_2}$, $y=\frac{y_1}{p^sy_2}$, where $x_i,y_i\in{\mathcal{O}_K}~and~x_1x_2\notin <p>, y_1y_2\notin <p>$.\\
     If $den(x)\in<p>$ then $r>0$, which gives
     \begin{align*}
         \frac{y_1^2}{p^{2s}y_2^2}&=\left(\frac{x_1}{p^rx_2}\right)^3+A\left(\frac{x_1}{p^r y_2}\right)+B\\
         &=\frac{x_1^3+Ap^{2r}x_1x_2^2+Bp^{3r}x_2^3}{p^{3r}x_2^3},
     \end{align*}
     since $ x_1\notin <p>$\\
     \begin{align*}
         x_1^3+Ap^{2r}x_1x_2^2+Bp^{3r}x_2^3\notin<p>.
     \end{align*}
    Hence we get
    \begin{align*}
        2s=3r \implies s>0,
    \end{align*}
     $\implies den(y)\in<p> $. Converse is also true.\\
    Therefore, $2s=3r\in\mathbb Z\implies s=3q,r=2q,q\in \mathbb Z$.
\end{proof}

Let $E:y^2=x^3+Ax+B,~~A,B\in{\mathcal{O}_K}$.\\
If $y\neq 0$, $E\setminus \{(x,0)\}$ can be transformed into $E^{'}:s=t^3+Ats^2+Bs^3$, where $s=\frac{1}{y}$ and $t=\frac{x}{y}$.\\
Therefore we can define a map, 
\begin{align*}
    \phi:E\setminus \{(x,0)\}\longrightarrow E^{'},
\end{align*} 
by,
\begin{align*}
    (x,y)\longrightarrow(t,s),~~~\infty \longrightarrow(0,0).
\end{align*}

\begin{lemma}\label{lemma8}
     Let $E:y^2=x^3+Ax+B$ with $A,B\in {\mathcal{O}_K}$. Any torsion point $(x,y)\in E(\mathbb Q(\sqrt{D}))$ of order $2$ has $x\in{\mathcal{O}_K}$ and $y=0$.
\end{lemma}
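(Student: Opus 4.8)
The plan is to mirror the argument of Lemma \ref{lemma2} essentially verbatim, since the only property of $\mathcal{O}_K$ that proof used was that $K=\mathbb{Q}(\sqrt{D})$ has class number one, i.e. $\mathcal{O}_K$ is a principal ideal domain. First I would observe that a point $P=(x,y)$ of order $2$ satisfies $2P=\infty$, hence $P=-P$; because the inverse of $(x,y)$ on a short Weierstrass curve is $(x,-y)$, this forces $y=-y$ and therefore $y=0$. Consequently $x$ is a root of the cubic $x^3+Ax+B=0$ with $A,B\in\mathcal{O}_K$, and since $P\in E(\mathbb{Q}(\sqrt{D}))$ we know a priori that $x\in\mathbb{Q}(\sqrt{D})$.

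The integrality then follows from a purely ideal-theoretic computation. Writing $x=a/b$ with $a,b\in\mathcal{O}_K$ and $\langle a,b\rangle=\langle 1\rangle$ (a coprime representative exists precisely because $\mathcal{O}_K$ is a PID, using the ideal-based divisibility introduced just before this lemma), I would substitute into the cubic and clear denominators to obtain $a^3+Aab^2+Bb^3=0$, whence $a^3=-b(Aab+Bb^2)\in\langle b\rangle$. Then the chain $\langle b\rangle=\langle a^3,b\rangle\supseteq\langle a,b\rangle^3=\langle 1\rangle$ shows $\langle b\rangle=\langle 1\rangle$, so $b$ is a unit and $x=a/b\in\mathcal{O}_K$.

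The point to emphasize — and the only thing that distinguishes this from the Euclidean-domain cases already handled — is that nothing in the argument requires a Euclidean algorithm: the existence of a coprime representative $a/b$ and the inclusion $\langle a,b\rangle^3\subseteq\langle a^3,b\rangle$ are valid in any PID. Since $D=-19,-43,-67,-163$ still yield class number one, unique factorization (equivalently, the PID property) remains available, so I expect no genuine obstacle here. If anything, the single step deserving care is justifying the reduction $x=a/b$ to coprime $a,b$ when $\mathcal{O}_K$ is not Euclidean; this is exactly why the paper restates divisibility in terms of ideal membership immediately beforehand, and it is what makes the carry-over routine.
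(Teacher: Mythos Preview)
Your proposal is correct and follows the paper's own proof essentially step for step: deduce $y=0$ from $P=-P$, write $x=a/b$ with $a,b\in\mathcal{O}_K$ coprime, use $a^3\in\langle b\rangle$ together with $\langle b\rangle=\langle a^3,b\rangle\supseteq\langle a,b\rangle^3=\langle 1\rangle$ to conclude $b$ is a unit. Your added remark that only the PID property (not a Euclidean algorithm) is needed is exactly the point of restating the lemma in this section.
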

 \begin{proof}
     Let $P=(x,y)$ has order $2$.\\
    In other words,
    \begin{align*}
        2P=\infty,
    \end{align*}
    which gives
    \begin{align*}
        P=-P.
    \end{align*}
    \\
    So, we can conclude that $y=0$.\\
    Therefore $x$ is a root of $x^3+Ax+B=0,~~~ A,B\in{\mathcal{O}_K}$.\\
    Since, $P\in E(\mathbb Q(\sqrt{D}))$, we will get $x\in\mathbb Q(\sqrt{D})$.\\
    and
    \begin{align*}
        x=\frac{a+b\sqrt{D}}{c+d\sqrt{D}}.
    \end{align*}
    So from our Elliptic curve,
    \begin{align*}
        0=\left(\frac{a+b\sqrt{D}}{c+d\sqrt{D}}\right)^3+A\left(\frac{a+b\sqrt{D}}{c+d\sqrt{D}}\right)+B.
    \end{align*}
    Let $a+b\sqrt{D}=a, c+d\sqrt{D}=b$, then
    \begin{align*}
        0&=\left(\frac{a}{b}\right)^3+A\left(\frac{a}{b}\right)+B\\
        &=a^3+Aab^2+Bb^3.
    \end{align*}
    So, $a^3\in<b>$, but we have $<a,b>=<c>$ (since we are working on PID and $a$ and $b$ are relatively prime).\\
    Which gives,
    \begin{align*}
        <a,b>=<1>.
    \end{align*}
    We have,
    \begin{align*}
        <b>=<a^3,b>\supseteq <a,b>^3=<1>,
    \end{align*}
    which implies $b$ is a unit in $\mathbb Q(\sqrt{D})$.\\
    Therefore,
    \begin{align*}
        x=\frac{a}{b}\in {\mathcal{O}_K}.
    \end{align*}
 \end{proof}

\begin{definition}
    For any $x\neq 0\in \mathbb Q(\sqrt{D})$, $p$-adic value of $x$ is
    \begin{align*}
        g_p(x)=g_p(\frac{a}{b})=r,
    \end{align*}
    where\\
    \begin{align*}
       \frac{a}{b}=p^r\frac{a_1}{b_1},~a,~b,~a_1,~b_1\in{\mathcal{O}_K},~a_1b_1\notin <p>. 
    \end{align*}
    
    Therefore by Lemma \ref{lemma1}, we have,
    \begin{align*}
        g_p(x)=-2q, g_p(y)=-3q.
    \end{align*}
    Define\\
    \begin{align*}
         E_r=\{(x,y)\in E(\mathbb Q(\sqrt{D})):g_p(x)\leq -2r, g_p(y)\leq -3r\}\bigcup \{\infty\}.
    \end{align*}
   
    \smallskip
    Clearly, $E(\mathbb Q(\sqrt{D}))\supseteq E_1\supseteq E_2\supseteq ...$
\end{definition}
\begin{lemma}\label{lemma9}
$(x,y)\in E_r$ if and only if $s\in <p^{3r}>$ and if $s\in <p^{3r}>$ then $t\in <p^r>$.
\end{lemma}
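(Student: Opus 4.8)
The plan is to mirror the argument of Lemma \ref{lemma3} almost verbatim, but phrased entirely in the language of the $p$-adic valuation $g_p$ and ideal membership $\in \langle p^k \rangle$, invoking Lemma \ref{lemma7} (the non-ED analogue of Lemma \ref{lemma1}) wherever the earlier proof used the divisibility relation $p \mid \cdot$. The engine driving everything is the additivity $g_p(uv) = g_p(u) + g_p(v)$, valid because $\mathcal{O}_K$ enjoys unique factorization, together with the valuation relation $g_p(x) = -2q$, $g_p(y) = -3q$ guaranteed by Lemma \ref{lemma7}.

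First I would establish the forward direction of the equivalence. If $(x,y) \in E_r$, then by the definition of $E_r$ we have $g_p(y) \leq -3r$. Writing $y = \frac{y_1}{p^{3r}y_2}$ with $y_1 \notin \langle p \rangle$, we obtain $s = \frac{1}{y} = \frac{p^{3r}y_2}{y_1}$; since $y_1 \notin \langle p \rangle$ the factor $p^{3r}$ survives in the numerator, so $g_p(s) \geq 3r$, that is $s \in \langle p^{3r}\rangle$. For the converse, suppose $s \in \langle p^{3r}\rangle$, so $g_p(s) \geq 3r$. Since $s = \frac{1}{y}$ this forces $g_p(y) \leq -3r$, i.e. $den(y) \in \langle p\rangle$; Lemma \ref{lemma7} then yields $den(x) \in \langle p\rangle$, and the valuation relation $g_p(x) = -2q$, $g_p(y) = -3q$ gives $g_p(x) \leq -2r$. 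Hence $(x,y) \in E_r$, completing the equivalence.

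For the second assertion, assume $s \in \langle p^{3r}\rangle$. Then the exact power of $p$ occurring in $s$ is $g_p(s) = 3k$ for some $k \geq r$, whence $g_p(y) = -3k$, and by Lemma \ref{lemma7} $g_p(x) = -2k$. Since $t = \frac{x}{y}$, additivity of $g_p$ gives $g_p(t) = g_p(x) - g_p(y) = -2k + 3k = k \geq r$, so $t \in \langle p^r\rangle$, as claimed.

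The main obstacle — really the only point requiring care — is justifying that $g_p$ is a well-defined valuation in this non-Euclidean setting: one must check that the representation $\frac{a}{b} = p^r\frac{a_1}{b_1}$ with $a_1 b_1 \notin \langle p\rangle$ determines $r = g_p\!\left(\tfrac{a}{b}\right)$ uniquely, and that $g_p$ is additive under multiplication. Both statements follow from unique factorization in $\mathcal{O}_K$, which holds precisely because $K$ has class number one and which the preceding remark has already secured. Once this is in hand, every step above is the straightforward ideal-theoretic transcription of Lemma \ref{lemma3}.
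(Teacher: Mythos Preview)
Your proof is correct and follows essentially the same route as the paper: both directions of the equivalence and the second assertion are argued via the valuation $g_p$ and the relation $g_p(x)=-2q$, $g_p(y)=-3q$ supplied by Lemma~\ref{lemma7}, exactly as in the paper's own proof (which is itself the ideal-theoretic rewrite of Lemma~\ref{lemma3}). Your added paragraph on why $g_p$ is well-defined via unique factorization is a helpful clarification that the paper leaves implicit, but it does not change the strategy.
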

\begin{proof}
    If $(x,y)\in E_r$, then by definition $g_p(y)\leq -3r$.\\
    So we have,
    \begin{equation*}
        y=\frac{y_1}{p^{3r}y_2},~~~~y_1\notin <p>.
    \end{equation*}
Clearly,
\begin{equation*}
    s=\frac{1}{y}=\frac{p^{3r}y_2}{y_1}
\end{equation*}
\begin{align*}
    \implies s\in  <p^{3r}>.
\end{align*}
Conversely, suppose $s\in<p^{3r}>$ then $den(y)\in <p^{3r}>$. By Lemma \ref{lemma1}, $den(x)\in<p^{2r}>$.
\begin{align*}
    \implies (x,y)\in E_r.
\end{align*}
If $s\in <p^{3r}>\implies (x,y)\in E_r$, so the exact power of $p$ in $den(y)$ is $p^{3k}$ for some $k\geq r$.\\
By Lemma \ref{lemma1}, $p^{2k}$ is the exact power of $p$ in $den(x)$.\\
We have $t=\frac{x}{y}\implies t\in <p^k>$. Therefore, $t\in <p^{r}>$.

\end{proof}

\begin{lemma}\label{lemma10}
    Any vertical line $t=k$, where $k$ is a constant such that $k\in <p>$, intersects $E^'$ in atmost one point $(t,s)$ with $s\in <p>$. This line is not tangent to this point of intersection. 
\end{lemma}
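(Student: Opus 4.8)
The plan is to mirror, in the principal-ideal language of the non-Euclidean case, the argument already carried out for Lemma \ref{lemma 4}. Suppose the vertical line $t=k$ with $k\in\langle p\rangle$ meets $E'$ at two points $(k,s_1)$ and $(k,s_2)$, both with $s_i\in\langle p\rangle$; the aim is to force $s_1=s_2$, which gives the ``at most one point'' assertion. First I would set up an induction proving $s_1\equiv s_2\pmod{p^n}$ for every $n\ge 1$.

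For the inductive step, assume $s_1\equiv s_2\pmod{p^k}$. Factoring $s_1^2-s_2^2=(s_1-s_2)(s_1+s_2)$ and noting $s_1-s_2\in\langle p^k\rangle$, $s_1+s_2\in\langle p\rangle$ gives $s_1^2\equiv s_2^2\pmod{p^{k+1}}$; the analogous factorisation of $s_1^3-s_2^3$ gives $s_1^3\equiv s_2^3\pmod{p^{k+2}}$. Substituting the two instances of the defining relation $s_i=k^3+Aks_i^2+Bs_i^3$ (the first coordinate is the common value $k$) then yields $s_1-s_2=Ak(s_1^2-s_2^2)+B(s_1^3-s_2^3)\in\langle p^{k+1}\rangle$, i.e. $s_1\equiv s_2\pmod{p^{k+1}}$, just as the paper does via the auxiliary quantities $s_i'$. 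Since a nonzero element of $\mathcal{O}_K$ has only finitely many factors of $p$, we have $\bigcap_n\langle p^n\rangle=\{0\}$, so $s_1-s_2$ lying in every $\langle p^n\rangle$ forces $s_1=s_2$.

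For the non-tangency claim I would differentiate the relation $s=t^3+Ats^2+Bs^3$ implicitly, obtaining $\frac{ds}{dt}=\frac{3t^2+As^2}{1-2Ast-3Bs^2}$. The line $t=k$ is vertical, so it could be tangent only where this slope is undefined, i.e. where $1-2Ast-3Bs^2=0$. At the point of intersection $s,t\in\langle p\rangle$, hence $2Ast$ and $3Bs^2$ lie in $\langle p^2\rangle$ and $1-2Ast-3Bs^2\equiv 1\pmod{p}$, a unit modulo $p$ and in particular nonzero. Thus the denominator never vanishes at such a point and the line is not tangent there.

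The only place the non-Euclidean hypothesis genuinely enters, and hence the main obstacle, is replacing the ordinary divisibility arithmetic of Lemma \ref{lemma 4} by statements about the principal ideals $\langle p^n\rangle$: I need that products of these principal ideals behave as $\langle p^a\rangle\langle p^b\rangle=\langle p^{a+b}\rangle$ and, crucially, that $\bigcap_n\langle p^n\rangle=\{0\}$ so that congruence modulo every power of $p$ upgrades to genuine equality. Both are consequences of unique factorization in the class-number-one ring $\mathcal{O}_K$, exactly the property highlighted in the remark preceding Lemma \ref{lemma7}; no division algorithm is required, so the argument goes through verbatim for $D=-19,-43,-67,-163$.
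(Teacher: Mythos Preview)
Your proposal is correct and follows essentially the same approach as the paper: an induction showing $s_1\equiv s_2\pmod{p^n}$ for all $n\ge 1$ (via the defining relation of $E'$ and the extra factors of $p$ coming from $s_1+s_2,\,s_1^2+s_1s_2+s_2^2\in\langle p\rangle$), followed by the implicit-differentiation check that $1-2Ast-3Bs^2\equiv 1\pmod{p}$ is nonzero. The only cosmetic difference is that you factor $s_1^2-s_2^2$ and $s_1^3-s_2^3$ directly, whereas the paper introduces $s_i'=s_i/p$ to reach the same congruences; also note (as in the paper) the symbol $k$ is doing double duty as both the abscissa and the induction index, which you may wish to rename.
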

\textbf{Note:} $a\equiv 0(mod~ p)$, if $a\in <p>$. Similarly for any power of $p$.
\begin{proof}
Suppose, the line intersects $E^'$ at $2$ points $(t,s_1),(t,s_2)\in E^'$ such that $s_i\in <p>$, $s_1\equiv s_2\equiv 0(mod~p)$.\\
Write $s_i=ps_i^'$.
Suppose that $s_1-s_2\in <p^k>$ for some $k\geq 1$. That is,
\begin{align*}
    s_1\equiv s_2(mod~p^k)~~~~~~for~~some~~ k\geq 1.
\end{align*}

Which gives,
    \begin{align*}
        s_1^{'}\equiv s_2^{'}(p^{k-1}).
    \end{align*}
    Then\\
    \smallskip
    \begin{align*}
        s_1^2\equiv s_2^2(p^{k+1}).
    \end{align*}
    Similarly,
    \begin{align*}
         s_1^3\equiv s_2^3(p^{k+2}).
    \end{align*}
    Therefore,
    \begin{align*}
        s_1=k^3+Aks_2^2+Bs_1^3\equiv k^3+Aks_2^2+Bs_2^3\equiv s_2(p^{k+1}).
    \end{align*}
    So by induction $s_1\equiv s_2(p)$ for all $n\geq 1.$ So $s_1=s_2$.\\
    Slope of the tangent line to the curve is 
    \begin{align*}
        \frac{ds}{dt}&=3t^2+As^2+2Ast\frac{ds}{dt}+3Bs^2\frac{ds}{dt}\\
        &=\frac{3t^2+As^2}{1-2Ast-3Bs^2}.
    \end{align*}
    If the line $t=k$ is tangent to the curve at $(s,t)$\\
    $\implies$\begin{align*}
        1-2Ast-3Bs^2=0,
    \end{align*} 
    but $s\equiv t\equiv 0(p)$, so
    \begin{align*}
        1-2Ast-3Bs^2\equiv 1. 
    \end{align*}
    Therefore, $t=k$ is not tangent to the curve.
    
\end{proof}
\begin{lemma}\label{lemma11}
    Suppose, the line $s=\alpha t+\beta$ intersects the curve at the points $(t_1,s_1)$ and $(t_2,s_2)$ in $E^{'}$. Then
\begin{align*}
    \alpha=\frac{t_2^{2}t_1 t_2+t_1^{2}+As_2^{2}}{1-A(s_1+s_2)t_1-B(s_2^{2}+s_1 s_2+s_1^{2})}.
\end{align*}
\end{lemma}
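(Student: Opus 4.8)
The plan is to exploit that each intersection point $(t_i,s_i)$ satisfies two equations simultaneously: the line equation $s_i=\alpha t_i+\beta$ and the curve equation $s_i=t_i^3+At_is_i^2+Bs_i^3$. Observe first that the statement is the exact analogue of equation \eqref{alpha} from Lemma \ref{lemma 5}, so the argument runs in parallel; the only thing that changes is that divisibility is now read in the ideal-theoretic sense, which plays no role in the present purely algebraic identity. I would split into the secant case $t_1\neq t_2$ and the tangent case $t_1=t_2$.

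For the secant case, since $t_1\neq t_2$ the slope is literally $\alpha=\dfrac{s_2-s_1}{t_2-t_1}$, so it suffices to verify the algebraic identity
\begin{align*}
(s_2-s_1)\bigl(1-A(s_1+s_2)t_1-B(s_2^2+s_1s_2+s_1^2)\bigr)=(t_2-t_1)\bigl(t_2^2+t_1t_2+t_1^2+As_2^2\bigr).
\end{align*}
The key steps are to expand the left-hand side using the factorizations $(s_2-s_1)(s_1+s_2)=s_2^2-s_1^2$ and $(s_2-s_1)(s_2^2+s_1s_2+s_1^2)=s_2^3-s_1^3$, then regroup so that the three $s_2$-terms form $s_2-As_2^2t_2-Bs_2^3$ and the three $s_1$-terms form $s_1-As_1^2t_1-Bs_1^3$; this regrouping forces the introduction of a correction term $As_2^2(t_2-t_1)$. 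Finally, substitute the curve relation in the form $s_i-As_i^2t_i-Bs_i^3=t_i^3$ to collapse the two grouped expressions into $t_2^3-t_1^3$, and factor $t_2^3-t_1^3=(t_2-t_1)(t_2^2+t_1t_2+t_1^2)$ to obtain the right-hand side. Dividing by $t_2-t_1$ yields the claimed formula.

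For the tangent case $t_1=t_2$, Lemma \ref{lemma10} guarantees the two points coincide, so $\alpha$ is the slope of the tangent line. I would compute this by implicit differentiation of $s=t^3+Ats^2+Bs^3$, giving
\begin{align*}
\frac{ds}{dt}=\frac{3t^2+As^2}{1-2Ast-3Bs^2},
\end{align*}
and then check that setting $t_1=t_2=t$ and $s_1=s_2=s$ in the proposed formula reproduces exactly this expression. The whole argument is an algebraic manipulation rather than a conceptual obstacle; the one point requiring care is the asymmetric bookkeeping in the secant computation, where the denominator carries $t_1$ while the numerator carries $As_2^2$, since this asymmetry is an artifact of the particular regrouping and one must track the $As_2^2(t_2-t_1)$ correction term without error.
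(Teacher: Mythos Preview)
Your proposal is correct and follows essentially the same approach as the paper: the paper also splits into the secant case $t_1\neq t_2$ (expanding $(s_2-s_1)$ times the denominator, regrouping with the correction term $As_2^2(t_2-t_1)$, and substituting the curve relation to reach $t_2^3-t_1^3+As_2^2(t_2-t_1)$) and the tangent case handled by implicit differentiation. You even reference Lemma~\ref{lemma10} for the tangent case, which is the correct analogue in this section (the paper itself copies over the reference to Lemma~\ref{lemma 4} from the earlier version).
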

\begin{proof}
    Suppose $t_1\neq t_2$ then $\alpha=\frac{s_2-s_1}{t_2-t_1}$.\\
    We have $s_i=t_i^3+At_is_i^2+Bs_i^3$,\\
    which gives,
    \begin{align*}
        (s_2-s_1)(1-A(s_1+s_2)t_1-B(s_2^2+s_1s_2+s_1^2))&=(s_2-s_1)-A(s_2^2-s_1^2)t_1-B(s_2^3-s_1^3)\\
        &=(s_2-As_2^2t_2-Bs_2^3)-(s_1-As_1^2t_1-Bs_1^3)+As_2^2(t_2-t_1)\\
        &=t_2^3-t_1^3+As_2^2(t_2-t_1)\\
        &=(t_2-t_1)(t_2^2+t_1 t_2+t_1^2+As_2^2).
    \end{align*}
    So,
    \begin{equation}\label{beta}
        \frac{s_2-s_1}{t_2-t_1}=\alpha=\frac{t_2^2+t_1 t_2+t_1^2+As_2^2}{1-A(s_1+s_2)t_1-B(s_2^2+s_1s_2+s_1^2)}.\\
   \end{equation}
    If $t_1=t_2$, then by Lemma \ref{lemma 4} both points are identical. By differentiation we will get the tangent line as
    \begin{align*}
        \frac{ds}{dt}=\frac{3t^2+As^2}{1-2Ast-3Bs^2},
    \end{align*}
    which is same as our expression when $t=t_1=t_2$ and $s=s_1=s_2$.
\end{proof}

\begin{lemma}\label{lemma12}
    Let $E_r^{'}=\phi(E_r)$ and $E^{'}(\mathbb Q(\sqrt{D}))$. Then $E_r^{'}$ is a subgroup of $E^{'}(\mathbb Q(\sqrt{D}))$.
\end{lemma}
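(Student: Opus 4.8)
The plan is to reproduce, in the ideal-divisibility language appropriate to the principal ideal domains $\mathcal{O}_K$ with $D=-19,-43,-67,-163$, the computation already carried out for Lemma~\ref{lemma6}. The subgroup criterion requires three things: that $E_r'$ contain the identity, that it be closed under inversion, and that it be closed under the group law. The first two are immediate once the third is set up, so the substance is closure under addition via the secant-line formula of Lemma~\ref{lemma11}.

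First I would take $(x,y)\in E_r$ with $\phi(x,y)=(t,s)$; by Lemma~\ref{lemma9} this means $s\in\langle p^{3r}\rangle$ and $t\in\langle p^r\rangle$. Substituting these divisibilities into the slope expression \eqref{beta}, the numerator $t_2^2+t_1t_2+t_1^2+As_2^2$ lies in $\langle p^{2r}\rangle$, while the denominator $1-A(s_1+s_2)t_1-B(s_2^2+s_1s_2+s_1^2)$ is congruent to $1$ modulo $\langle p\rangle$ and is therefore a unit locally at $p$. This yields $\alpha\in\langle p^{2r}\rangle$, and since $\beta=s-\alpha t$ with $s\in\langle p^{3r}\rangle$, $\alpha\in\langle p^{2r}\rangle$ and $t\in\langle p^r\rangle$, I obtain $\beta\in\langle p^{3r}\rangle$.

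Next, for $P_1,P_2\in E_r'$ with sum $P_3=(t_3,s_3)$, I would substitute the line $s=\alpha t+\beta$ into $s=t^3+Ats^2+Bs^3$ to obtain a cubic in $t$ with leading coefficient $1+\alpha^2A+\alpha^3B$ (again a unit locally at $p$) and with $t^2$-coefficient a combination of the terms $A\alpha\beta$ and $B\alpha^2\beta$. Writing $p^*=(t^*,s^*)$ for the third intersection point, we have $P_3=-p^*$, and Vieta's relation gives
\begin{align*}
    t_1+t_2+t^*=-\frac{2A\alpha\beta+3B\alpha^2\beta}{1+\alpha^2A+\alpha^3B}.
\end{align*}
Because $\alpha\in\langle p^{2r}\rangle$ and $\beta\in\langle p^{3r}\rangle$, the numerator lies in $\langle p^{5r}\rangle$, so $t_1+t_2+t^*\in\langle p^{5r}\rangle$. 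Since $t_1,t_2\in\langle p^r\rangle$ and $5r\geq r$, it follows that $t^*\in\langle p^r\rangle$, hence $t_3=-t^*\in\langle p^r\rangle$, and then $s_3=\alpha t_3+\beta\in\langle p^{3r}\rangle$. By Lemma~\ref{lemma9}, $P_3\in E_r'$. The identity $(0,0)=\phi(\infty)$ satisfies the conditions vacuously, and since inversion sends $(t,s)$ to $(-t,-s)$ the defining conditions are preserved, so $E_r'$ is a subgroup.

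I expect the main obstacle to be justifying that the divisions---by the denominator in \eqref{beta} and by the leading coefficient $1+\alpha^2A+\alpha^3B$---are legitimate at the level of ideal divisibility, i.e.\ that these elements lie outside $\langle p\rangle$ so the quotients remain $p$-integral. This is where the unique factorization furnished by class number one is essential (as flagged in the earlier remark); once the valuation-like behaviour of $\langle p\rangle$ is in hand, the remainder is a direct transcription of the Lemma~\ref{lemma6} argument.
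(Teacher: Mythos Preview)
Your proposal is correct and follows essentially the same approach as the paper: establish $\alpha\in\langle p^{2r}\rangle$ and $\beta\in\langle p^{3r}\rangle$ from Lemma~\ref{lemma9} and \eqref{beta}, then use Vieta on the cubic obtained by substituting $s=\alpha t+\beta$ into $E'$ to force $t_3\in\langle p^r\rangle$ and $s_3\in\langle p^{3r}\rangle$. If anything, your write-up is slightly more careful---you explicitly verify the identity and inversion cases and flag why the denominators are $p$-units, and your coefficient $3B\alpha^2\beta$ corrects a typographical slip in the paper's displayed fraction.
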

\begin{proof}
    Let $(x,y)\in E_r,\phi(x,y)=(t,s)$.\\
    We have $s\in <p^{3r}>,t\in <p^r>$, also 
    \begin{equation*}
        \alpha=\frac{t_2^2+t_1 t_2+t_1^2+As_2^2}{1-A(s_1+s_2)t_1-B(s_2^2+s_1s_2+s_1^2)}
    \end{equation*}
    $\implies \alpha\in <p^{2r}>$.
    Since $\beta=s-\alpha t\implies \beta\in<p^{3r}>$.\\
For $p_1,p_2\in E_r^{'}$ and $p_1+p_2=p_3$, let $s=\alpha t+\beta$ be the line passing through $p_1$ and $p_2$,\\ $\implies$
    \begin{align*}
        \alpha t+\beta=t^3+At(\alpha t+\beta)^2+B(\alpha t+\beta)^3,
    \end{align*}
    then,
    \begin{align*}
        0=t^3(1+\alpha^2 A+\alpha^3 B)+t^2(2A\alpha \beta+3\alpha^2\beta B)+\ldots
    \end{align*}
    let $p^*=(t^*,s^*)$ be the new intersection point. Then $p_3=(t_3,s_3)=-p^*$.\\$\implies$
    \begin{align*}
        t_1+t_2-t_3=t_1+t_2+t^*=-\frac{2A\alpha\beta+2B\alpha^2\beta}{1+\alpha^2A+\alpha^3 B}\equiv 0(p^{5r}).
    \end{align*}
    Since $t_i\in <p^r>$ for $i=1,2\implies  t_3\in<p^r>$. Also, $s_3=\alpha t_3+\beta\equiv 0(p^{3r})$. Then, by the Lemma \ref{lemma7}, $\pm p_3\in E_r^{'}\implies E_r^{'}$ is a subgroup of $E^{'}(\mathbb Q(\sqrt{D}))$.
 \end{proof} 

    For $P=(t,s)\in E_r^'$, write $t(P)=t$. Then $t(P_1)+t(P_2)=t(P_3)(mod~p^{5r})$. Suppose $P\in E_r^'$ has order $m$, assume that $m\notin <p>$. Then there exists $r>0$ such that $P\in E_r$ but $P\notin E_{r+1}$. If $m\notin<p>$ then
    \begin{align*}
        t(mP)=mt(P)(mod~p^{5r}),
    \end{align*}
    which gives,
    \begin{align*}
        0=mt(P)(mod~p^{5r}),
    \end{align*}
    that implies, $t(P)\in <p^{5r}>$, which is a contradiction.\\
    Therefore, $E_1^'$ has no point of finite order. So, using Proposition \ref{proposition} and Lemma \ref{lemma12} we have the following theorem.

\begin{theorem}\label{thm2}
    If $(x,y)\in E(\mathbb Q(\sqrt{D}))$ is of finite order, which are not $2$-torsion defined over $\mathbb{Q}(\sqrt{D})$, then $x,y\in \mathbb Z(\sqrt{D})$, where $D=-19, -43, -67, -163$.
\end{theorem}
Therefore, from Theorem \ref{thm1} and \ref{thm2} we have proved our main Theorem \ref{main thm}.

\section{Application of Nagell-Lutz}
In this section, we obtain a certain application of the Nagell-Lutz theorem in terms of producing elements of class groups of real quadratic fields. The idea is as follows:

Consider an elliptic curve $E$ given by the equation
$$y^2=x^3+Ax+B$$
over $\mathbb{Q}(\sqrt{-d})$ for the specific values of $d$ such that the corresponding ring of integers is UFD.

\subsection{Torsion of a family of elliptic curves over $\mathbb Q(i)$}
 Calculate the torsion of the elliptic curve  $E$  given by $y^2=x^3+(a+ib)x$. By the Nagell-Lutz theorem proved in the previous section, we have
$$y^2|4.(a+ib)^3$$
So, the possibility is that
$$y^2=(a+ib)^2, 4, 4(a+ib)^2$$
Write
$$y=y_1+iy_2$$
So, we have 
$$(y_1+iy_2)^2=(a+ib)^2$$
Substituting the above into the equation of $E$ we have
$$y^2=x^3+(a+ib)x$$
which can be written as
$$(a+ib)^2=x^3+(a+ib)x$$
Taking the norm, we have 
$$N(a+ib)^2=N(x)N(x^2+a+ib)$$
From this we have 
$$p^2=(x_1^2+x_2^2)N(x^2+a+ib)\;.$$
here $x=x_1+ix_2$ and $a^2+b^2=p$. Hence, either $p=x_1^2+x_2^2$ or $p^2=x_1^2+x_2^2$ or $x_1^2+x_2^2=1$. 
In the first case, we have 
$$x_1^2+x_2^2\equiv0 \mod p$$
So, all possibilities are 
$$X+Y\equiv0 \mod p$$
where $X,Y$ both are squares mod $p$. Since $p\equiv 1$ mod 4 we have $2p-1$ solutions. In addition, in this case, we need to satisfy the following.
$$N(x^2+a+ib)=p$$
which means that $(x_1^2-x_2^2+a)^2+(2x_1x_2+b)^2=p$. So modulo $p$ we have 
$$(x_1^2-x_2^2+a)^2+(2x_1x_2+b)^2=0$$
Expanding, we get 
$$x_1^4+x_2^4+a^2-2x_1^2x_2^2+2ax_1^2-2ax_2^2+4x_1^2x_2^2+b^2+4x_1x_2b=0$$
hence
$$x_1^4+x_2^4-2x_1^2x_2^2+2ax_1^2-2ax_2^2+4x_1^2x_2^2+4x_1x_2b=0$$
as $a^2+b^2=p$.
The above expression simplified is 
$$(x_1^2+x_2^2)^2+2ax_1^2-2ax_2^2+4x_1x_2b=0$$
Now using the fact that $x_1^2+x_2^2=0 \mod p$ we have 
$$2a(x_1^2-x_2^2)+4x_1x_2b=0$$
This implies
$$a(x_1^2-x_2^2)+2x_1x_2b=0$$
Now using $a=\pm {ib} \mod p$ here $i^2=-1 \mod p$, 
$$a(x_1^2-x_2^2+2ix_1x_2)=0$$
or $$a(x_1^2-x_2^2-2ix_1x_2)=0$$
From the above, we have
$$2x_2^2-2ix_1x_2\equiv 0 \mod p$$
or 
$$2x_1^2-2ix_1x_2\equiv 0\mod p$$
Using these,
$$x_2=0 ~or~ x_2=ix_1$$
or 
$$x_1=0, x_1=ix_2$$
Combining these, we have
$$x_1^2+x_2^2=p=x_1^2-x_2^2$$
Hence $2x_2^2=0$, implying $x_2=0$ and hence $x_1^2=p$, which is not possible, since $p$ is a prime.

Hence, we have
$$x_1=0, x_2^2=p, x_2=0, x_1^2=p$$
this is also not possible.
So, no solution in this case.
It can be possible that $$x_1^2+x_2^2=p^2$$
then
$$(x_1^2+x_2^2)^2+2ax_1^2-2ax_2^2+4x_1x_2b=1$$
modulo p.
Substituting $x_1^2+x_2^2=p^2$ mod $p$ we have
$$p^4+2a(x_1^2-x_2^2)+4x_1x_2b=1$$
So modulo $p$ we have 
$$a(x_1^2-x_2^2+2ix_1x_2)=1$$
or
$$a(x_1^2-x_2^2-2ix_1x_2)=1$$
Substituting $x_1^2=-x_2^2 \mod p$, we have 
$$2x_1^2+2x_1(ix_2)=1$$
or $$2x_1^2-2ix_1^2=1$$
modulo $p$.
In the first  case,
$$4x_1^2=1$$
or 
$$0=1$$
The second sub case is absurd.
In the first sub case, we have 
$$p|4x_1^2-1$$
So, $p|2x_1+1$ or $p|2x_1-1$
Therefore, we have 
$$x_1=\frac{pk-1}{2}$$
or $$x_1=\frac{pk+1}{2}$$
Consequently, we have 
$$x_2=\frac{pk+1}{2}$$
or 
$$x_2=\frac{pk-1}{2}\;.$$
For a unique $k$ , in this case equal to $1$. So we have 2-solutions here.

 It can also happen that 
$$x_1^2+x_2^2=1$$
and
$$(x_1^2+x_2^2)^2+2ax_1^2-2ax_2^2+4x_1x_2b=p^2$$
So we have 
$$1+2a(x_1^2-x_2^2)+4x_1x_2b=0 \mod p$$
$$1+2a(x_1^2+x_2^2-2x_2^2)+4x_1x_2b=0 \mod p$$
$$1+2a(1-2x_2^2)+4iax_1x_2=0 \mod p$$
or 
$$1+2a(1-2x_2^2)-4iax_1x_2=0$$
Then we have 
$$(4ix_1x_2+4x_2^2-2)a=1$$

Substituting $x_2=0,x=\pm{1}$ or the other way.

We have
$$-2.a=1$$
So we have $(p-2)a=1$, which is not possible. So, no solutions. Therefore, the subgroup obtained from this is $\mathbb Z/3\mathbb Z$.
 But Now observe that 
$$y^2=x(x^2+a+ib)$$
gives 
$$N(y^2)=N(x)N(x^2+a+ib)$$
So we have $\mathbb Z/3$ in the torsion subgroup of $E$. In addition, it is easy to see that there is a 2-torsion on this curve $E$. So, it can be $\mathbb{Z}/6\mathbb Z$. It can be at most $\mathbb Z/6\mathbb Z\oplus \mathbb Z/2\mathbb Z$ according to the results of Najman in \cite{N}.

\section{An interesting application to produce elements in the class group of quadratic fields}

In this section we prove the following theorem:

\begin{theorem}
Given an elliptic curve $y^2=x^3+(-2m+b\sqrt{-d})x$ over $\mathbb Q(\sqrt{-d})$, we have a homomorphism from the group of 2-torsion points on this elliptic curve to the class group of the number field $\mathbb Q(\sqrt{-p^3+4mp})$ for infinitely many primes $p$, not dividing $2m$ and the class group contains a subgroup isomorphic to $\mathbb Z/2\mathbb Z\times \mathbb Z/2\mathbb Z$. Here, $m$ is a positive integer.
\end{theorem}

\begin{proof}

Let $E$ be an elliptic curve defined over $\mathbb Q(\sqrt{-d})$, where $d>0$ and square-free an integer. Consider the elliptic curves of the form 
$$y^2=x^3+(a+b\sqrt{-d})x$$
then applying the norm map on both sides we get
$$N(y)^2=N(x)(N(x^2+a+b\sqrt{-d}))$$
The above expression 
$$N(x^2+a+b\sqrt{-d})$$
is nothing but
$$(x_1^2+dx_2^2)^2+2a(x_1^2-dx_2^2)+4bdx_1x_2$$
where $x=x_1+ix_2$.
The part 
$$2a(x_1^2-dx^2)+4bdx_1x_2$$
can be written as
$$2a(x_1^2+dx_2^2)-4adx_2^2+4bdx_1x_2$$
which is 
$$2aN(x)-4dx_2(ax_2-bx_1)\;.$$
Therefore, if we have 
$ax_2-bx_1=0$ then we get
$$N(y)^2=N(x)^2+2aN(x)\;.$$
Or an elliptic curve of the form
$$Y^2=X^3+2aX$$
Now, the condition means $a/b=x_1/x_2$, that is, $(a,b)$ and $(x_1,x_2)$ gives the same line when joined from $(0,0)$. Then we have $$x_1+\sqrt{-d}x_2=c(a+b\sqrt{-d})\;.$$ Substituting in the equation of $E$, we get 
$$y^2=c^3(a+b\sqrt{-d})^3-c(a+b\sqrt{-d})^2$$
that is
$$y^2=c(a+b\sqrt{-d})^2(c^2(a+b\sqrt{-d})-1)$$
Now suppose that $y=0$, then we have 
$$c^2(a+b\sqrt{-d})^2=0$$
or $$c^2(a+b\sqrt{-d})=1$$
which gives $ac^2=1$
so $a=1$, hence we have the 2-torsion subgroup of $E$, $\mathbb Z/2\mathbb Z\times \mathbb Z/2\mathbb Z$. Applying the norm, this group gives the 2-torsion subgroup of 
$$Y^2=X^3+2aX$$
Let $Y=0$, then we have $X=0$ or $X^2=-2a$, so we have $X=N(x)$, so we have 
$$(x_1^2+dx_2^2)^2=-2a$$
and $$x_1+dx_2^2=\sqrt{-2a}$$
if we take $a=-2m^2$ for a positive integer $m$, then we get the
$$x_1^2+dx_2^2=2m$$
So $N(x)=2m$. Hence , $(0,0), (2m,0)$ are two points in the torsion group of $Y^2=X^3+2aX$. Now applying the technique due to Soleng \cite{S} we find that, starting from the above elliptic curve we can obtain a quadratic field of the form $\mathbb Q(\sqrt{-p^3-2ap})$ for infinitely many primes $p$, such that the above field has a torsion subgroup isomorphic to $\mathbb Z/2\mathbb Z\times \mathbb Z/2\mathbb Z$, obtained from the 2-torsion subgroup of $Y^2=X^3+2aX$, where $a=-2m^2$. Hence, starting from $E$ the actual elliptic curve , applying the norm and then applying the Soleng's technique in \cite{S} we can achieve a family of imaginary quadratic fields whose class group has a $2$-torsion subgroup isomorphic to $\mathbb Z/2\mathbb Z\times \mathbb Z/2\mathbb Z$.

Let us recall in brief the construction due to Soleng \cite{S}. If $(x,y)$ is a point in $Y^2=X^3+2aX$, then $(x+p,y)$ is a point in
$$Y^2=X^3+(3p^2-a)X-p^3-2ap$$
We only need to assume that $(x+p,y)$ is a primitive point on the new elliptic curve. For that we need 
$gcd(x+p, y, (x+p)^2+(3p^2-a))=1$. Now take $(x,y)=(0,0)$ or $(x,y)=(2m,0)$. Then we have to show that 
$$(p,3p^2-2m)=1$$
and 
$$(2m+p, 3p^2-2m)=1$$
In the first case , if we assume that $2m$ is not a multiple of $p$, then we are done. In the second case, we have 
$$\alpha(2m+p)+\beta(3p^2-2m)=1$$
for some integers $\alpha, \beta$.
Then, we can write
$$(\alpha-\beta)2m+(\alpha+3p\beta)p=1$$
So, we need to assume that $2m$ is not a multiple of $p$ as before. Hence, we can guaranty primitive points in this way. Since there are infinitely many primes $p$, such that $2m$ is not divisible by $p$, this is guaranteed.
\end{proof}

\section{Acknowledgment}

The authors thank the SRM University AP for hosting this project. The authors also thank Prof. Kalyan Chakraborty for many valuable suggestions regarding the paper. The authors also thank Prof. John Cremona for discussions and suggestions of improvements to the first arxiv version of the preprint. The authors are grateful to Prof. Filip Najman for his valuable suggestions on the main theorem, which led to an improvement of the paper.


\end{document}